\newtheorem{thm}{Theorem}[section]
\newtheorem{rmk}{Remark}[section]
\newtheorem{lem}{Lemma}[section]
\newtheorem{prop}{Proposition}[section]
\newtheorem{e.g.}[thm]{Example}
\begin{document}
\title{Global Solution for Gas-Liquid Flow of 1-D van der Waals Equation of State with Large Initial Data}
\author{ Qiaolin He$^1$,
 Ming Mei$^{2,3}$, Xiaoding Shi$^{4}$\thanks{\scriptsize{Corresponding author, shixd@mail.buct.edu.cn}}, Xiaoping Wang$^5$ \\
   \scriptsize{$^1$School of Mathematics, Sichuan University, Chengdu, 610064, China}\\
 \scriptsize{$^{2}$Department of Mathematics, Champlain College St.-Lambert, St.-Lambert, Quebec, J4P 3P2, Canada}\\
 \scriptsize{$^{3}$Department of Mathematics and Statistics, McGill University, Montreal, Quebec,H3A 2K6, Canada }\\
 \scriptsize{$^{4}$Department of Mathematics, School of Science, Beijing University of Chemical Technology, Beijing, 100029, China}\\
 \scriptsize{$^{5}$Department of Mathematics, Hong Kong University of Science and Technology, Hong Kong, China}}
\date{} \maketitle
 \noindent\textbf{Abstract}. This paper is concerned with  a diffuse interface model  for  the gas-liquid phase transition. The model consists the compressible Navier-Stokes equations with van der Waals equation of state and a modified Allen-Cahn equation. The global existence and uniqueness of strong solution with  the periodic boundary condition (or the mixed boundary condition)  in one dimensional space is proved  for large initial data. Furthermore, the  phase variable and the density of the gas-liquid mixture are proved to stay in the physical reasonable interval. The proofs are based on the elementary energy method and the maximum principle, but with new development,  where some techniques are introduced to establish the uniform bounds of the density and to treat the non-convexity of the pressure function.

\

 \noindent{\bf Keywords:}  global solution, Navier-Stokes equations, Allen-Cahn equation, gas-liquid flow, van der Waals equation of state
\

\noindent{\bf MSC:} 35M10,35Q30

\section {\normalsize Introduction and Main Result}
\setcounter{equation}{0}
In the last few decades, there have been many progresses on modelling and analysis of the multiphase and phase transition problems, in particular on the phase field models of the phenomena, see \cite{AC1979}-\cite{CH1958}, \cite{HMR2012} \cite{LT-1998} \cite{V1894}  \cite{QWS} \cite{WQS} and the references therein. In this paper, we investigate  Navier-Stokes-Allen-Cahn system proposed by  Blesgen \cite{B1999} which   describes the compressible two-phase flow with diffusive interface. The system consists  of  the compressible Navier-Stokes equations and a modified Allen-Cahn equation, and it
 is especially useful for analyzing the phase transition properties of gas-liquid flow. It allows phases to shrink or grow due to changes of density in the fluid and incorporates their transport with the current. The Navier-Stokes-Allen-Cahn system is commonly expressed as follows (see \cite{B1999}, \cite{FPRS2008}, \cite{DLL2013}, \cite{CG2017}, \cite{STY} etc.)
\begin{equation}\label{3dNSAC}
\left\{\begin{array}{llll}
\displaystyle \partial_t\rho+\textrm{div}(\rho \mathbf{u})=0,  \\
\displaystyle \displaystyle \partial_t(\rho \mathbf{u})+\textrm{div}(\rho \mathbf{u}\otimes \mathbf{u})+\nabla p-(\nu\Delta\mathbf{u}+\eta\nabla\textrm{div}\mathbf{u})=-\epsilon\mathrm{div}\big(\nabla\chi\otimes\nabla\chi-\frac{|\nabla\chi|^2}{2}\mathbb{I}\big),\\
\displaystyle\partial_t\big(\rho\chi\big)+\mathrm{div}(\rho \chi \mathbf{u})=-\frac{1}{\epsilon}\frac{\partial f(\rho,\chi)}{\partial\chi}+\frac{\epsilon}{\rho} \Delta\chi,
\end{array}\right.
\end{equation}
where   $\rho=\rho(\mathbf{x},t)$,  $\mathbf{u}=\mathbf{u}(\mathbf{x},t)$ and $\chi=\chi(\mathbf{x},t)$ are the density, the velocity and the concentration difference of the gas-liquid mixture.
The constants $\nu>0, \ \eta\geq0$ are viscosity coefficients, and the constant $\epsilon>0$  is defined as the thickness of the diffuse interface of the gas-liquid mixture.
The potential energy density $f=f(\rho,\chi)$,  satisfying the Ginzburg-Landau double-well potential model (see \cite{HW}, \cite{DLL2013},  \cite{CG2017} and the references therein), follows that:
 \begin{equation}\label{potential energy density}
   f(\rho,\chi)=-3\rho+\frac{8\Theta}{3}\ln\frac{\rho}{3-\rho}+\frac{1}{4}\big(\chi^2-1\big)^2,
 \end{equation}
 with $0<\Theta$ is the  positive constant  related to the ratio of the actual temperature to the critical temperature.
 The pressure $p$ is given by the following van der Waals equation of state (see \cite{V1894}, \cite{HK}, \cite{HW}, \cite{MLW1}, \cite{MLW2}, \cite{HSWZ}, \cite{HLS} and the references therein)
 \begin{equation}\label{the formula of pressure}
p(\rho)=\left\{\begin{array}{llll}
 \displaystyle\rho^2\frac{\partial f}{\partial\rho}=-3\rho^2+\frac{8\Theta\rho}{3-\rho}\quad & \mathrm{if}\ 0\leq\rho<3,\\
 \displaystyle+\infty,\quad & \mathrm{if}\ \rho\geq3.
 \end{array}\right.
\end{equation}
We have the following properties of the pressure $p$:
 \begin{enumerate}
\item[(i)]  $p(\rho)>0$ for $\rho>0$, $p(0)=0$;

\item[(ii)] When $\Theta\geq1$, $p(\rho)$ is a  monotone increasing function. When $0<\Theta<1$,  there exist two positive densities $3>\beta>\alpha>0$ such that $p(\rho)$ is increasing on $[0,\alpha]$ and on $[\beta,3)$, $p(\rho)$ is decreasing on $(\alpha,\beta)$;

 \item[(iii)] $p'(\rho)=\frac{-6(\rho^3-6\rho^2+9\rho-4\Theta)}{(3-\rho)^2}$. When $0<\Theta<1$, there exist a positive density $\gamma$, such that, $p(\gamma)=p(\beta)$,and $p(\rho)>p(\gamma)$ for $\rho>\gamma$, $p$ is increasing on $[0,\gamma]$.
\end{enumerate}
\begin{rmk}The van der Waals state equation  \eqref{the formula of pressure} is  proposed by the Dutch physicist J. D. van der Waals \cite{V1894}. It is a thermodynamic equation of state which is based on the theory that fluids are composed of particles with non-zero volumes, and subject to an  inter-particle attractive force.  Over the critical temperature (i.e. $\Theta\geq1$ in \eqref{the formula of pressure}), this equation of state  is an improvement over the ideal gas law. And what's more, below the critical temperature (i.e. $0<\Theta<1$ in \eqref{the formula of pressure}), this equation is also qualitatively reasonable for the low-pressure gas-liquid states.
\end{rmk}
\begin{rmk}
The concentration difference $\chi$ of the gas-liquid mixture can be understood as
$\chi=\chi_1-\chi_2$, where $\chi_i=\frac{M_i}{M}$  is the mass concentration of the fluid $i~(i=1,2)$,  $M_i$ is the  mass of the components in the representative material volume $V$.
The item of $\epsilon\Big(\nabla\chi\otimes\nabla\chi-\frac{|\nabla\chi|^2}{2}\mathbb{I}\Big)$ in the momentum equation \eqref{3dNSAC} can be seen as an additional stress contribution in the stress tensor. This describes the capillary effect associated with free energy $E_{\mathrm{free}}(\rho,\chi)=\int_\Omega\Big(\frac{\rho}{\epsilon} f(\rho,\chi)+\frac{\epsilon}{2}|\nabla\chi|^2\Big)d\mathbf{x}$, (see \cite{AF2008}, \cite{FPRS2008}, \cite{DLL2013},\cite{CG2017}, \cite{CHMS-2018} and the references therein).
\end{rmk}

There are a lot of works on the well-posedness of the solutions to compressible Navier-Stokes system.  We refer to  the work  of Matsumura-Nishida \cite{MN1980},
Matsumura-Nishihara \cite{MN1985}-\cite{MN1986}, Lions \cite{Lions1998}, Huang-Li-Xin \cite{HLX2012},
  Mei \cite{M1997}-\cite{M1999}, Huang-Li-Matsumura \cite{HLM2010},  Huang-Matsumura-Xin \cite{HMX2006}, Huang-Wang-Wang-Yang \cite{HWWY2015}, Shi-Yong-Zhang \cite{SYZ2016} and the references therein.

The study of interfacial phase changing in mixed fluids can be traced back  to the work by van der Waals (1894).  van der Waals described the interface between two immiscible fluids as a layer in the pioneer  paper \cite{V1894}.  His idea was  successfully applied  by Cahn-Hilliard  \cite{CH1958} and Allen-Cahn \cite{AC1979} to describe the complicated phase separation and coarsening phenomena, the motion of anti-phase boundaries in the mixture respectively.
Lowengrub-Truskinovsky \cite{LT-1998}  added the effect of the motion of the particles  and the interaction with the diffusion into the Cahn-Hilliard equation, and  the Navier-Stokes-Cahn-Hilliard system was put forward.   Blesgen \cite{B1999} then combined the compressible Navier-Stokes system with the modified Allen-Cahn equation to describe the behavior of cavitation in a flowing liquid, which was known as  Navier-Stokes-Allen-Cahn system.  The difference between  Navier-Stokes-Allen-Cahn  system and Navier-Stokes-Cahn-Hilliard system is that,  for the former,  the diffusion fluxes are neglected and  the development of the constitutive equation for mass conversion of any of the considered phases  is focused. This leads to  that the latter conserves the volume fractions while the former does not.

Nowadays, Navier-Stokes-Allen-Cahn system and Navier-Stokes-Cahn-Hilliard system are widely used in the interfacial diffusion problems of fluid mechanics and material science. Comparatively speaking, the numerical treatment to the former is simpler than that of the latter which involves fourth-order differential operators. However, because  the concentration difference $\chi$ in \eqref{3dNSAC} does not preserve overall volume fraction, a Lagrange multiplier  is usually introduced in  \eqref{3dNSAC}$_3$  as a constraint  to conserve the volume, see Yang-Feng-Liu-Shen \cite{YFLS2006}, Zhang-Wang-Mi \cite{ZWM} and the references therein.    Feireisl-Petzeltov$\mathrm{\acute{a}}$-Rocca-Schimperna \cite{FPRS2008} obtained the global existence of weak solutions for the isentropic case, where the  method they used is the framework   introduced by  Lions \cite{Lions1998}. Along the way proposed by Feireisl \textit{et al.}, Ding-Li-Luo \cite{DLL2013} proved the global existence of one-dimensional strong solution  in the  bounded domain for initial density without vacuum states.  Chen-Guo \cite{CG2017}  generalized Ding-Li-Luo's  result  to the case that the initial vacuum is allowed.

However, all the results above are for the ideal fluid. In order to study the gas-liquid phase transition, we need to consider the non-ideal viscous fluid in which,  there is an interval of the density $\rho$ where the pressure $p$ decreases as $\rho$ increases, and the phase transition
takes place. The equations of state \eqref{the formula of pressure} proposed by van der Waals is quite satisfactory in describing this phenomena. Hsieh-Wang \cite{HW} solved the isentropic compressible Navier-Stokes system model by the van der Waals state equation  numerically  by a pseudo-spectral method with a form of artificial viscosity. They  showed that the phase transition depends on the selection of the initial density. He-Liu-Shi \cite{HLS} investigated  the large time behavior for van der Waals fluid in 1-D by using a second order TVD Runge-Kutta splitting scheme combined with Jin-Xin relaxation scheme.
 Mei-Liu-Wong \cite{MLW1,MLW2} studied  Navier-Stokes system  with additional artificial viscosity
 and $p(\rho)=\rho^{-3}-\rho^{-1}$. By using the Liapunov functional method, they proved the existence, uniqueness, regularity and uniform boundedness of the periodic solution in 1-D. Hoff and Khodia \cite{HK} considered the dynamic stability of certain steady-state weak solutions of system (1.1) for compressible van der Waals fluids in 1-D whole space with the small initial disturbance.

In this paper,  we  study the global existence of the solution for the system  \eqref{3dNSAC} with the  van der Waals state equation \eqref{the formula of pressure} in one dimension. More precisely, for general initial conditions without vacuum state, our purpose is to study the existence and uniqueness of global
strong solution for the isentropic  Navier-Stokes-Allen-Cahn systems  \eqref{3dNSAC} even with  large initial data. Moreover we show that the phase variable $\chi$  belongs to the physical interval $[-1,1]$. Some new techniques are developed to  establish the up and low bounds of the density $\rho$, and to treat the non-convexity of the pressure $p(\rho)$, both are crucial steps in the proof.

We now present our main result. The 1-D isentropic Navier-Stokes-Allen-Cahn system in the Euler coordinates is expressed in the following
 \begin{equation}\label{NSAC}
\left\{\begin{array}{llll}
\displaystyle \rho_t+(\rho u)_x=0, \ \ & x\in\mathbb{R},t>0, \\
\displaystyle \rho u_t+\rho uu_x+p_x=\nu u_{xx}-\frac{\epsilon}{2}\big(\chi_x^2\big)_x,\ \ & x\in\mathbb{R},t>0,\\
\displaystyle\rho\chi_t+\rho u\chi_x=-\frac{1}{\epsilon}(\chi^3-\chi)+\frac{\epsilon}{\rho}\chi_{xx},\ \ & x\in\mathbb{R},t>0,
\end{array}\right.
\end{equation}
with the $L$-periodic  boundary value condition:
\begin{equation}\label{periodic boundary for Euler}
\left\{\begin{array}{llll}
(\rho,u,\chi)(x,t)=(\rho,u,\chi)(x+L,t),\ \ & x\in\mathbb{R},t>0,\\
(\rho,u,\chi)\big|_{t=0}=(\rho_0,u_0,\chi_0),\ \ & x\in\mathbb{R}.
\end{array}\right.
\end{equation}
We introduce the Hilbert space $L^2_{\mathrm{per}}$ of  square integrable functions  with the period $L$:
\begin{equation}\label{periodic function sobolev space}
L^2_{\mathrm{per}}=\Big\{g(x)\big|g(x+L)=g(x)\ \mathrm{for\ all}\ x\in\mathbb{R},\ {\mathrm{and}\ } g(x)\in L^2(0,L) \Big\},
\end{equation}
with the norm denoted also by $\|\cdot\|$ (without confusion) which is given by  $\|g\|=(\int_0^L|g(x)|^2dx )^{\frac{1}{2}}$.
$H_{\mathrm{per}}^l \ (l\geq0)$ denotes the  $L_{\mathrm{per}}^2$-functions $g$ on $\mathbb{R}$ whose derivatives $\partial^j_x g,  j=1,\cdots,l$ are $L_{\mathrm{per}}^2$
 functions, with the norm
$  \|g\|_l=(\sum_{j=0}^l\|\partial^j_x g\|^2)^{\frac{1}{2}}$.
The initial and boundary data for the density, velocity and concentration difference of two components are assumed to be:
\begin{equation}\label{initial data of v}
(\rho_0,u_0)\in H_{\mathrm{per}}^1,\ \ \chi_0\in H_{\mathrm{per}}^2;\quad 0<\rho_0<3, \quad-1\leq\chi_0\leq1;
 \end{equation}
\begin{eqnarray}\label{Compatibility condition of chi}
\chi_t(x,0)=-u_0\chi_{0x}+\frac{\epsilon}{\rho_0^2}\chi_{0xx}-\frac{1}{\epsilon\rho_0}\Big(\chi_0^3-\chi_0\Big).
\end{eqnarray}

 \begin{thm} \label{main thm-1}
Assume that $(\rho_0,u_0,\chi_0)$ satisfies    \eqref{initial data of v}-\eqref{Compatibility condition of chi},  then there exists a unique global strong solution $(\rho,u,\chi)$ of the system \eqref{NSAC}-\eqref{periodic boundary for Euler} such that for any $T>0$,
\begin{eqnarray}\label{global solution for periodic boundary}
&&\rho\in L^\infty(0,T;H_{\mathrm{per}}^1)\cap L^2(0,T;H_{\mathrm{per}}^1),\notag\\
&&u\in L^\infty(0,T;H_{\mathrm{per}}^1)\cap L^2(0,T;H_{\mathrm{per}}^2),\notag\\
&&\chi\in L^\infty(0,T;H_{\mathrm{per}}^2)\cap L^2(0,T;H_{\mathrm{per}}^3), \\
&& -1\leq\chi\leq1,\ 0< \rho<3,\ \mathrm{for \ all}\ (x,t)\in\mathbb{R}\times[0,T],\notag
\end{eqnarray}
and
\begin{eqnarray}\label{energy estimate for periodic boundary problem}
\left.\begin{array}{llll}
 \displaystyle\sup_{t\in [0,T]}\big\{\|(\rho,u)(t)\|^2_1+\|\chi\|_2^2\big\}+\int_0^T\big(\|\rho\|_1^2+\|u\|_2^2+\|\chi\|_3^2\big)dt   \leq C,
\end{array}\right.
 \end{eqnarray}
 where $C$ is a positive constant depending only on the initial data and $T$.
\end{thm}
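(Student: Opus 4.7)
The overall strategy is the standard local-existence-plus-a-priori-estimates-plus-continuation argument. Local existence of a strong solution on some short interval is obtained by a routine iteration in $H^1_{\mathrm{per}}\times H^1_{\mathrm{per}}\times H^2_{\mathrm{per}}$, the singular pressure causing no issue as long as $\rho_0<3$ is bounded away from $3$. The real task is to show a priori bounds on $[0,T]$ forcing $\chi\in[-1,1]$ and $\rho$ strictly in $(0,3)$; once these hold, continuation to arbitrary $T$ and the regularity claimed in \eqref{global solution for periodic boundary}--\eqref{energy estimate for periodic boundary problem} follow from a standard parabolic bootstrap. The bound $-1\le\chi\le1$ comes first by a weak maximum principle for \eqref{NSAC}$_3$: testing by $(\chi-1)_+$ and using continuity converts the transport part into $\frac{d}{dt}\!\int\tfrac{\rho}{2}(\chi-1)_+^2\,dx$, the diffusion contributes a nonpositive term, and the reaction $-\frac{1}{\epsilon}\chi(\chi-1)(\chi+1)$ has a favourable sign on $\{\chi>1\}$; a Gronwall argument (using only $\rho>0$ from the local theory) kills $(\chi-1)_+$, and symmetrically $(\chi+1)_-$.

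Next I would derive the basic energy identity by multiplying \eqref{NSAC}$_2$ by $u$ and \eqref{NSAC}$_3$ by $\tfrac{1}{\epsilon}W'(\chi)-\tfrac{\epsilon}{\rho}\chi_{xx}$ with $W(\chi)=\tfrac14(\chi^2-1)^2$, and by using continuity to rewrite the pressure work in terms of $\mathcal{H}(\rho):=\rho\int_{\bar\rho}^\rho\tfrac{p(s)-p(\bar\rho)}{s^2}\,ds$, where $\bar\rho$ is the conserved mean density. The capillary cross-term $\tfrac{\epsilon}{2}(\chi_x^2)_xu$ in the momentum equation cancels, after integration by parts, against the convective contribution of the Allen-Cahn equation, yielding an identity of the form
\[
\tfrac{d}{dt}\!\!\int\!\!\left(\tfrac12\rho u^2+\mathcal{H}(\rho)+\tfrac{\rho}{\epsilon}W(\chi)+\tfrac{\epsilon}{2}\chi_x^2\right)dx+\nu\!\!\int\!u_x^2\,dx+\!\!\int\!\tfrac{1}{\rho}\!\left(\tfrac{W'(\chi)}{\epsilon}-\tfrac{\epsilon}{\rho}\chi_{xx}\right)^{\!\!2}dx=0.
\]
Because $\mathcal{H}(\rho)\to+\infty$ as $\rho\to3^-$ while $\mathcal{H}$ is bounded below, this yields an integral control preventing $\rho$ from approaching $3$ on a set of positive measure, together with uniform $L^\infty_tL^2_x$ bounds on $\sqrt\rho\,u$ and $\chi_x$ and a uniform $L^2_tL^2_x$ bound on $u_x$. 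It does not, however, yield pointwise density bounds, because $\mathcal{H}$ is non-convex on the spinodal region $(\alpha,\beta)$.

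The main obstacle is therefore the uniform pointwise bounds $0<c\le\rho\le 3-c$. I would introduce the effective viscous flux $F:=\nu u_x-p(\rho)-\tfrac{\epsilon}{2}\chi_x^2$; from \eqref{NSAC}$_2$ it satisfies $F_x=\rho(\partial_t u+u\partial_x u)$, a quantity controllable via the energy estimate. Combining with the continuity identity $u_x=-(\partial_t+u\partial_x)\log\rho$ gives
\[
\nu(\partial_t+u\partial_x)\log\rho=-F-p(\rho)-\tfrac{\epsilon}{2}\chi_x^2
\]
along particle trajectories. To cope with both the non-monotonicity of $p$ and its singularity at $\rho=3$, I would split $p=p_1+p_2$ with $p_1(\rho)=\tfrac{8\Theta\rho}{3-\rho}$ monotone-increasing and carrying the singular behaviour, and $p_2(\rho)=-3\rho^2$ smooth and bounded on $[0,3)$. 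Along a trajectory $p_1$ has the right sign to drive $\log\rho$ downward whenever $\rho$ is close to $3$, a self-limiting mechanism that produces an upper bound $\rho\le 3-c(T)$, while $p_2$ is a bounded, sign-indefinite perturbation that is absorbed in the Gronwall step. Applying the same trajectory identity to $1/\rho$, using $p\ge0$, $p(0)=0$ together with the $L^2$-control of $F$ and $\chi_x$ from the energy step, gives the lower bound $\rho\ge c(T)>0$. This is precisely the step where the techniques announced in the abstract--handling the non-convex pressure and producing the uniform density bounds--are essential, and it is where I expect the main technical difficulty to lie.

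Once $0<c\le\rho\le 3-c$ is secured, the system becomes uniformly parabolic and routine higher-order estimates close the argument. Multiplying \eqref{NSAC}$_2$ by $u_t$, using the density bounds to control $\int\rho|u_t|^2\,dx$, upgrades $u$ to $L^\infty_tH^1_{\mathrm{per}}\cap L^2_tH^2_{\mathrm{per}}$; an $H^1$-estimate for $\rho$ then follows by expressing $\rho_x$ through the momentum equation and the already-controlled quantities. Differentiating \eqref{NSAC}$_3$ once and twice in $x$ and testing against appropriate multipliers, with the compatibility condition \eqref{Compatibility condition of chi} enabling a clean initial-layer bootstrap, yields $\chi\in L^\infty_tH^2_{\mathrm{per}}\cap L^2_tH^3_{\mathrm{per}}$. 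Uniqueness is a standard energy estimate on the difference of two solutions with identical data, and continuation extends the local solution to any prescribed $T$, completing the proof of \eqref{global solution for periodic boundary}--\eqref{energy estimate for periodic boundary problem}.
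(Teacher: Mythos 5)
Your skeleton (local existence by iteration, basic energy identity with the relative potential $\Phi(\rho)=\rho\int_{\tilde\rho}^{\rho}\frac{p(s)-p(\tilde\rho)}{s^2}ds$, maximum principle for $-1\le\chi\le1$, then higher-order parabolic estimates and continuation) agrees with the paper. The divergence, and the gap, is in the one step you yourself identify as central: the pointwise density bounds. You propose an effective-viscous-flux argument with $F=\nu u_x-p(\rho)-\tfrac{\epsilon}{2}\chi_x^2$ and an ODE for $\log\rho$ along particle paths. To close that argument you need $F$ in something like $L^1(0,T;L^\infty_x)$, i.e. you need both the spatial mean of $F$ and $\int_0^L|F_x|\,dx=\int_0^L\rho|\dot u|\,dx$ under control. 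Neither is available at this stage. First, $\int_0^T\!\!\int_0^L\rho|\dot u|^2\,dx\,dt$ is a higher-order quantity obtained by testing the momentum equation with $\dot u$ (or $u_t$), and that estimate itself requires the very density bounds you are trying to prove --- your plan is circular here. Second, the mean of $F$ contains $\int_0^L p(\rho)\,dx$, and this is \emph{not} controlled by the energy bound $\int_0^L\Phi(\rho)\,dx\le C$: near $\rho=3$ one has $p(\rho)\sim C(3-\rho)^{-1}$ while $\Phi(\rho)\sim C\ln\frac{1}{3-\rho}$, so the energy gives no integrability of the pressure itself. For the lower bound you also need $\int_0^T\|\chi_x\|_{L^\infty}^2dt$, which forces you to first extract $\chi_{xx}\in L^2_{t,x}$ from the $\mu$-dissipation; that part is fine but should be stated before, not after, the trajectory argument.

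The paper closes this step differently: rewriting $u_{xx}=\rho(1/\rho)_{xt}+\rho u(1/\rho)_{xx}$ and multiplying the momentum equation by $(1/\rho)_x$ yields a Kanel'/BD-entropy type identity whose good term is $\int_0^L\frac{p'(\rho)}{\rho^2}\rho_x^2\,dx$. The non-monotonicity is handled by splitting the \emph{spatial domain} into the sets where $\rho$ lies in the increasing range of $p$ and where it lies in the spinodal interval $[\alpha,\beta]$; on the latter $|p'(\rho)|$ is uniformly bounded, so the bad contribution is $\le C\int_0^L\rho|(1/\rho)_x|^2dx$ and is absorbed by Gronwall. This gives $\sup_t\int_0^L\rho|(1/\rho)_x|^2dx\le C$, hence $\|1/\rho\|_{L^\infty}\le C$ by a Sobolev inequality for $1/\rho$, and $\sup_t\|\rho_x\|_{L^2}\le C$, all without ever needing $\rho\dot u$ or $\int p\,dx$. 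Your idea of splitting $p=p_1+p_2$ into a monotone singular part plus a bounded part is the same instinct applied to the wrong object; to repair your proof you should transplant it into a weighted estimate on $(1/\rho)_x$ (or $(\log\rho)_x$) rather than into a pointwise trajectory ODE. As written, the density-bound step of your proposal does not close.
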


\noindent\begin{rmk}
There are two difficulties to overcome in proving Theorem 1.1. One is the  upper and lower bounds of the density $\rho$, the other is the non-convexity of the pressure. For the former, we use the singularity of pressure  and the energy estimation of $\|\frac{1}{\rho}\|_{L^{\infty}([0,L]\times[0,T])}$.  For the latter, we decompose the pressure according to its convexity. The
results of the Theorem 1.1 are valid  even for large initial data.  They also  match well with the existing numerical studies in \cite{HW} and \cite{HLS}.
\end{rmk}

Moreover, we  consider the following mixed boundary value problem:
\begin{equation}\label{mixed boundary problem}
\left\{\begin{array}{llll}
\displaystyle \rho_t+(\rho u)_x=0,  \\
\displaystyle \rho u_t+\rho uu_x+p_x=\nu u_{xx}-\frac{\epsilon}{2}\big(\chi_x^2\big)_x,\\
\displaystyle\rho\chi_t+\rho u\chi_x=-\frac{1}{\epsilon}(\chi^3-\chi)+\frac{\epsilon}{\rho}\chi_{xx},\\
\displaystyle(u,\chi_x)\big|_{x=0,L}=(0,0),\\
\displaystyle(\rho,u,\chi)\big|_{t=0}=(\rho_0,u_0,\chi_0).
\end{array}\right.
\end{equation}
Similarly,
we have the following existence theorem for the mixed boundary problem \eqref{mixed boundary problem}. The proof will be omitted.
\begin{thm} \label{main thm-2}
Assume that $(\rho_0,u_0,\chi_0)$ satisfies
 \begin{equation}\label{initial data of rho u and chi}
(\rho_0,u_0)\in H^1,\ \ \chi_0\in H^2,\quad 0<\rho_0<3,\quad-1\leq\chi_0\leq1,
 \end{equation}
\begin{eqnarray}\label{Compatibility condition of rho u and chi}
\chi_t(x,0)=-u_0\chi_{0x}+\frac{\epsilon}{\rho_0^2}\chi_{0xx}-\frac{1}{\epsilon\rho_0}\Big(\chi_0^3-\chi_0\Big),
\end{eqnarray}
then there exists a unique global strong solution $(\rho,u,\chi)$ of the system \eqref{mixed boundary problem},such that for any $T>0$,
\begin{eqnarray}\label{global solution for periodic boundary}
&&\rho\in L^\infty(0,T;H^1)\cap L^2(0,T;H^1),\notag\\
&&u\in L^\infty(0,T;H^1)\cap L^2(0,T;H^2),\notag\\
&&\chi\in L^\infty(0,T;H^2)\cap L^2(0,T;H^3), \\
&& -1\leq\chi\leq1,\ 0< \rho<3,\ \mathrm{for \ all}\ (x,t)\in[0,L]\times[0,T],\notag
\end{eqnarray}
and
\begin{eqnarray}\label{energy estimate for mixed boundary problem}
\left.\begin{array}{llll}
 \displaystyle\sup_{t\in [0,T]}\big\{\|(\rho,u)(t)\|^2_1+\|\chi\|_2^2\big\}+\int_0^T\big(\|\rho\|_1^2+\|u\|_2^2+\|\chi\|_3^2\big)dt   \leq C,
\end{array}\right.
 \end{eqnarray}
  where $C$ is a positive constant depending only on the initial data and $T$.
\end{thm}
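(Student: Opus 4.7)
The plan is to mirror the argument for Theorem \ref{main thm-1}, adapting each estimate to the no-slip/Neumann boundary data $u|_{x=0,L}=0$ and $\chi_x|_{x=0,L}=0$. I would first set up local existence and uniqueness in $H^1\times H^1\times H^2$ by standard linearization and contraction mapping, using the compatibility condition \eqref{Compatibility condition of rho u and chi} to ensure that $\chi_t(\cdot,0)\in L^2$ and hence that the fixed-point iteration closes in the required space. Global existence then follows by extending the local solution via uniform a priori estimates on $[0,T]$ and continuing.

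The first a priori fact I would establish is $-1\le\chi\le 1$. Multiplying the modified Allen--Cahn equation by $(\chi-1)_+$ and integrating on $[0,L]$, the boundary terms from the $\chi_{xx}$ piece vanish thanks to $\chi_x|_{x=0,L}=0$, and the cubic nonlinearity $-\frac{1}{\epsilon}\chi(\chi-1)(\chi+1)$ is non-positive on $\{\chi\ge 1\}$; this yields $\frac{d}{dt}\int_0^L\rho(\chi-1)_+^2\,dx\le 0$, so $\chi\le 1$ for all $t$. The symmetric argument with $(\chi+1)_-$ gives $\chi\ge-1$. Next, I would derive the basic energy identity by testing \eqref{mixed boundary problem}$_2$ with $u$ and \eqref{mixed boundary problem}$_3$ with suitable multipliers, producing
\begin{equation*}
\frac{d}{dt}\!\int_0^L\!\Bigl(\tfrac12\rho u^2+\rho F(\rho,\chi)+\tfrac{\epsilon}{2}\chi_x^2\Bigr)dx+\int_0^L\!\Bigl(\nu u_x^2+\tfrac{1}{\epsilon}(\chi^3-\chi)^2/\rho^2\cdot\rho+\cdots\Bigr)dx=0,
\end{equation*}
with $F$ a primitive of $p(\rho)/\rho^2$; here the no-slip condition kills the boundary terms from the viscous and capillary stresses.

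The central difficulty, as flagged in Remark 1.2, is the control of the pointwise bounds $0<\rho<3$. For the upper bound I would exploit the singularity $p(\rho)\to+\infty$ as $\rho\to 3^-$: using $\rho F(\rho,\chi)$ as part of the energy and the blow-up of $F$ near $\rho=3$ forces $\rho$ away from $3$ in $L^\infty$. For the lower bound I would write $1/\rho$ along particle trajectories via the mass equation and use an effective-viscous-flux type representation $\nu u_x=\rho\dot{Q}+p(\rho)+\tfrac{\epsilon}{2}\chi_x^2$ for a suitable $Q$; combined with the decomposition of $p$ into its monotone part and a bounded Lipschitz correction handling the non-convex region $(\alpha,\beta)$, this yields an $L^\infty$ bound on $1/\rho$. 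The non-convexity of $p$ is overcome by treating the convex part as a good dissipative term in the energy and absorbing the "bad" non-convex piece through the Lipschitz estimate $|p(\rho_1)-p(\rho_2)|\le C|\rho_1-\rho_2|$ valid on any compact sub-interval of $(0,3)$ once the two-sided pointwise bounds are in hand.

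Once $0<c_0\le\rho\le C_0<3$ and $|\chi|\le 1$ are secured, the higher-order estimates proceed in the usual hierarchy: differentiate the momentum equation, test with $u_t$ to bound $\|u_x\|$ and $\int_0^T\|u_{xx}\|^2\,dt$; differentiate the phase equation and test with $\chi_{xxt}$ to bound $\|\chi_{xx}\|$ and $\int_0^T\|\chi_{xxx}\|^2\,dt$; propagate $\|\rho_x\|$ from the mass equation with the now-controlled $u_x$. Under the mixed boundary data, the integration-by-parts steps produce boundary contributions only from terms of the form $u_t u_x|_0^L$ and $\chi_{xt}\chi_{xx}|_0^L$; differentiating the boundary conditions $u|_{x=0,L}=0$ in $t$ and $\chi_x|_{x=0,L}=0$ in $t$ respectively kills these, so the calculation is essentially identical to the periodic case of Theorem \ref{main thm-1}. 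I expect the density bounds in the non-convex regime to be the only step where genuinely new work is needed; everything else is a routine transcription.
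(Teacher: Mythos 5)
Your overall plan --- transcribe the proof of Theorem \ref{main thm-1} and check that the boundary terms produced by each integration by parts vanish under $u|_{x=0,L}=0$ and $\chi_x|_{x=0,L}=0$ --- is exactly what the paper intends; it states the proof of Theorem \ref{main thm-2} is "similar" and omits it. Your bookkeeping of the boundary contributions in the basic energy identity and in the higher-order estimates is correct, and proving $-1\le\chi\le1$ by truncation rather than by the maximum principle is an acceptable variant.

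The gap is in the step you yourself flag as the only one requiring genuinely new work: the bound on $\|1/\rho\|_{L^\infty}$ together with the treatment of the non-monotone pressure. As written, your argument is circular: you propose to absorb the non-convex part of $p$ via the Lipschitz bound $|p(\rho_1)-p(\rho_2)|\le C|\rho_1-\rho_2|$ ``valid on any compact sub-interval of $(0,3)$ once the two-sided pointwise bounds are in hand,'' but those two-sided bounds are precisely the conclusion this step must deliver. The ``effective-viscous-flux representation $\nu u_x=\rho\dot Q+p+\frac{\epsilon}{2}\chi_x^2$ for a suitable $Q$'' is left undefined, and the Kazhikhov-type pointwise representation of $1/\rho$ it suggests is delicate here exactly because $p$ changes monotonicity. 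The paper's actual mechanism (Lemma \ref{lem of inf estimate for rho}) is different and closes without circularity: rewrite $u_{xx}=\rho(1/\rho)_{xt}+\rho u(1/\rho)_{xx}$, test the momentum equation with $(1/\rho)_x$ to obtain a Bresch--Desjardins-type entropy inequality for $\int_0^L\rho|(1/\rho)_x|^2\,dx$ with dissipation $\int_0^L\frac{p'(\rho)}{\rho^2}\rho_x^2\,dx$, and split $[0,L]$ into $A_{\mathrm{increase}}(t)$ (where $p'\ge0$, kept as good dissipation) and $A_{\mathrm{decrease}}(t)=\{\alpha\le\rho\le\beta\}$, on which $-p'$ is bounded by the explicit constant $\frac{6(27-4\Theta)}{(3-\beta)^2}$ \emph{without} any prior pointwise control of $\rho$, because $A_{\mathrm{decrease}}$ is by definition a fixed compact density range; the bad term is then $\le C\int_0^L\rho|(1/\rho)_x|^2\,dx$ and is absorbed by Gronwall, after which the pointwise bound follows from the interpolation inequality $\|1/\rho\|_{L^{\infty}}\le L\int_{0}^L\rho|(1/\rho)_x|^2\,dx+2L/\int_0^L\rho_0\,dx$. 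You should replace your density step with this argument; it carries over verbatim to the mixed boundary conditions, since the only boundary term it generates is $\frac{\nu}{2}\rho u|(1/\rho)_x|^2\big|_0^L$, which vanishes because $u=0$ at $x=0,L$.
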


The outline of this paper is as follows. In Section 2, we first give the local existence of the solution for the system \eqref{NSAC}-\eqref{periodic boundary for Euler}. Then, we give a  series of lemmas which lead us the desired a priori estimates. Finally, Theorem 1.1 is proved by the well-known alternative result and the maximum principle for parabolic equation.

\section {Proofs of the main theorem}
\setcounter{equation}{0}
In this section, we will present the global existence  on strong solution for the  periodic problem \eqref{NSAC}-\eqref{periodic boundary for Euler} .
Firstly, for $\forall m>0$, $M>0$,  $T>0$, we define the periodic solution space:
\begin{eqnarray}\label{periodic function space}
&&X_{\mathrm{per},m,M}([0,T])\equiv\Big\{(\rho,u,\chi)\Big|(\rho,u)\in C^0([0,T];H_{\mathrm{per}}^1),\chi\in C^0([0,T];H_{\mathrm{per}}^2),\qquad\qquad\qquad\notag\\
 &&\qquad\quad\qquad\qquad\qquad\rho\in L^2([0,T];H_{\mathrm{per}}^1), u\in L^2([0,T];H_{\mathrm{per}}^2),\chi\in L^2([0,T];H_{\mathrm{per}}^3),\\
 &&\qquad\quad\qquad\qquad\qquad\qquad\inf_{x\in\mathbb{R},t\in [0,T]}\rho(x,t)\geq m,\sup_{t\in [0,T]}\{\|(\rho,u)\|_1^2,\|\chi\|_2^2\}\leq M
 \Big\}.\notag
\end{eqnarray}

\begin{prop}[Local existence]\label{local existence and uniqueness for approximate periodic solution}
For $\forall m>0$, $M>0$, if $\inf_{x\in\mathbb{R}}\rho_0(x,t)\geq m$, $\|(\rho_0,u_0)\|_1^2$, $\|\chi_0\|_2^2\leq M$,
then there exists a small time $T_*=T_*(\rho_0,u_0,\chi_0)>0$  such that the periodic boundary problem \eqref{NSAC}-\eqref{periodic boundary for Euler} admits a unique solution $(\rho,u,\chi)$ satisfying that $(\rho,u,\chi)\in X_{\mathrm{per},\frac{m}{2},2M}([0,T_*])$.
\end{prop}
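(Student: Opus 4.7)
The plan is to build the local solution by a Picard iteration in which the transport equation for $\rho$, the parabolic equation for $u$, and the parabolic Allen-Cahn equation for $\chi$ are linearized and solved sequentially. Concretely, given $(\rho^{n},u^{n},\chi^{n})\in X_{\mathrm{per},\frac{m}{2},2M}([0,T_*])$, define the next iterate by
\begin{align*}
&\rho^{n+1}_t+(\rho^{n+1}u^{n})_x=0,\\
&\rho^{n+1}u^{n+1}_t+\rho^{n+1}u^{n}u^{n+1}_x+p(\rho^{n+1})_x=\nu u^{n+1}_{xx}-\epsilon\,\chi^{n}_x\chi^{n}_{xx},\\
&\rho^{n+1}\chi^{n+1}_t+\rho^{n+1}u^{n+1}\chi^{n+1}_x=-\tfrac{1}{\epsilon}\bigl((\chi^{n})^3-\chi^{n+1}\bigr)+\tfrac{\epsilon}{\rho^{n+1}}\chi^{n+1}_{xx},
\end{align*}
with the same data $(\rho_0,u_0,\chi_0)$. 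Each step is solvable: the first equation is linear transport, while the second and third are uniformly parabolic once $\rho^{n+1}\ge m/2$ is known, so classical theory in periodic Sobolev spaces produces $(\rho^{n+1},u^{n+1},\chi^{n+1})$ in the required regularity class.

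The heart of the argument is to close uniform bounds in $X_{\mathrm{per},\frac{m}{2},2M}$ on a short interval $[0,T_*]$. The density bounds come directly from the characteristic representation $\rho^{n+1}(x,t)=\rho_0(X(x,t))\exp\!\bigl(-\!\int_0^t u^{n}_x(X,s)\,ds\bigr)$: since $H^1_{\mathrm{per}}\hookrightarrow L^\infty$ in one dimension gives $\|u^{n}_x(\cdot,s)\|_{L^\infty}\le C\sqrt{2M}$, one obtains
\[
m\,e^{-C\sqrt{2M}\,T_*}\le \rho^{n+1}(x,t)\le \bar\rho_0\, e^{C\sqrt{2M}\,T_*},\qquad \bar\rho_0:=\max_x\rho_0<3.
\]
For $T_*$ small (depending on $m$, $M$, $\bar\rho_0$), this yields $m/2\le\rho^{n+1}\le\tfrac{1}{2}(\bar\rho_0+3)<3$, which keeps $p(\rho^{n+1})$ and its derivatives bounded on the iterate. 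Standard $L^2$ energy estimates for $u^{n+1}$ and $u^{n+1}_x$ (by differentiating the momentum equation), for $\chi^{n+1}$, $\chi^{n+1}_x$, $\chi^{n+1}_{xx}$, and for $\rho^{n+1}_x$ (via differentiating the transport equation), combined with a Gronwall argument, then give $\|(\rho^{n+1},u^{n+1})\|_1^2+\|\chi^{n+1}\|_2^2\le 2M$ on $[0,T_*]$ after a further shrinking of $T_*$.

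Once the iterates are bounded in $X_{\mathrm{per},\frac{m}{2},2M}([0,T_*])$, I would show the map is a contraction in a weaker norm. Setting $\delta f^{n}:=f^{n+1}-f^{n}$ for $f\in\{\rho,u,\chi\}$ and subtracting consecutive systems yields linear equations for $(\delta\rho^{n+1},\delta u^{n+1},\delta\chi^{n+1})$ driven by $(\delta\rho^{n},\delta u^{n},\delta\chi^{n})$; the uniform bounds control every variable coefficient, and a direct $L^2$-type estimate gives
\[
\|(\delta\rho^{n+1},\delta u^{n+1})\|_{L^\infty_tL^2}+\|\delta\chi^{n+1}\|_{L^\infty_tH^1}\le C\,T_*^{1/2}\bigl(\|(\delta\rho^{n},\delta u^{n})\|_{L^\infty_tL^2}+\|\delta\chi^{n}\|_{L^\infty_tH^1}\bigr),
\]
a strict contraction after one more reduction of $T_*$. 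The limit inherits the uniform bounds and therefore lies in $X_{\mathrm{per},\frac{m}{2},2M}([0,T_*])$; uniqueness follows by applying the same difference estimate to two candidate solutions.

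The principal obstacle is preserving the upper bound $\rho<3$: because $p(\rho)$ is singular at $\rho=3$, the density must be kept strictly away from $3$ throughout, which forces the density control to come from the characteristic representation of the transport equation rather than from an energy identity, and which ultimately dictates how small $T_*$ can be taken in terms of the initial gap $3-\bar\rho_0$. By contrast, the non-convexity of $p$ plays no role at the local stage: on the controlled interval $[m/2,\tfrac{1}{2}(\bar\rho_0+3)]$ the derivative $p'(\rho)$ is simply bounded and its sign is irrelevant. That difficulty is deferred to the global \emph{a priori} estimates needed to continue the solution to $[0,T]$.
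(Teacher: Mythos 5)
Your strategy coincides with the paper's: the paper sets up exactly this kind of linearized Picard iteration (its scheme \eqref{iterative-NSAH} differs from yours only in which iterate is inserted in the capillary term and in the double-well nonlinearity) and then invokes ``the usual iterative approach,'' omitting all details. So there is no methodological divergence; your proposal simply supplies the standard estimates the paper leaves out.

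However, one step in your write-up is wrong as stated, and it sits precisely at the point you identify as the principal obstacle. You claim $\|u^{n}_x(\cdot,s)\|_{L^\infty}\le C\sqrt{2M}$ from $H^1_{\mathrm{per}}\hookrightarrow L^\infty$; but that embedding controls $\|u^{n}\|_{L^\infty}$ by $\|u^{n}\|_{H^1}$, not $\|u^{n}_x\|_{L^\infty}$, which would require a pointwise-in-time $H^2$ bound that the iteration class does not provide (it only gives $u\in L^2(0,T_*;H^{2}_{\mathrm{per}})$). The exponent in your characteristic formula must instead be estimated as $\int_0^{t}\|u^{n}_x\|_{L^\infty}\,ds\le C\,t^{1/2}\|u^{n}\|_{L^2(0,t;H^{2}_{\mathrm{per}})}$, so the $L^2_tH^2$ dissipation norm of $u^{n}$ has to be carried as one of the quantities you close uniformly on $[0,T_*]$, not only $\sup_t\{\|(\rho,u)\|_1^2,\|\chi\|_2^2\}\le 2M$. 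A related bookkeeping issue appears in the contraction step: the equation for $\delta\rho^{n+1}$ contains the source $(\rho^{n}\,\delta u^{n})_x$, so the fixed-point estimate needs $\|\delta u^{n}\|_{L^2_tH^1}$ (which the parabolic estimate for $\delta u^{n}$ does supply) on both sides of the inequality, whereas your stated contraction norm $\|(\delta\rho,\delta u)\|_{L^\infty_tL^2}+\|\delta\chi\|_{L^\infty_tH^1}$ omits it. Both repairs are routine and do not change the conclusion, but as literally written the density bound and the contraction do not close.
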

\begin{proof} Taking $0<T<+\infty$, for $\forall m>0$, $M>0$, we construct an  iterative sequence $(\rho^{(n)},u^{(n)},\chi^{(n)})$,$n=1,2\cdots\cdots$, satisfying $(\rho^{(0)},u^{(0)},\chi^{(0)})=(v_0,u_0,\chi_0)$,
and the  following iterative scheme
\begin{equation}\label{iterative-NSAH}
\left\{\begin{array}{llll}
\displaystyle \rho^{(n)}_t+(\rho^{(n)} u^{(n-1)})_x=0,  \\
\displaystyle \rho^{(n)} u^{(n)}_t+\rho^{(n)} u^{(n-1)}u^{(n)}_x+(p(\rho^{(n)}))_x=\nu u^{(n)}_{xx}-\frac{\epsilon}{2}\big((\chi^{(n)})_x^2\big)_x,\\
\displaystyle\rho^{(n)}\chi^{(n)}_t+\rho^{(n)} u^{(n-1)}\chi^{(n)}_x=-\frac{1}{\epsilon}((\chi^{(n-1)})^3-\chi^{(n-1)})+\frac{\epsilon}{\rho^{(n)}}\chi^{(n)}_{xx},\\
\displaystyle(\rho^{(n)},u^{(n)},\chi^{(n)})(x,t)=(\rho^{(n)},u^{(n)},\chi^{(n)})(x+L,t),\\
(\rho^{(n)},u^{(n)},\chi^{(n)})(x,0)=\big(\rho_0,u_0,\chi_0\big)(x),
\end{array}\right.
\end{equation}
By using the usual iterative approach (c.f. \cite{CHMS-2018}), we can obtained the local existence of the solution for the periodic boundary problem \eqref{NSAC}-\eqref{periodic boundary for Euler},  the details are omitted.
\end{proof}
Now we will prove  the global existence and uniqueness of the solution
for the periodic boundary problem \eqref{NSAC}-\eqref{periodic boundary for Euler}.
 Setting
\begin{equation}\label{mu}
  \mu=\frac{1}{\epsilon}(\chi^3-\chi)-\frac{\epsilon}{\rho}\chi_{xx}.
\end{equation}
From the physical point of view, the functional $\mu$ in \eqref{mu} can be understood as the chemical potential. The basic energy equality is presented below. From  the definition of the pressure $p$ in \eqref{the formula of pressure}, we fix a positive reference density $\tilde \rho$ satisfying (see the properties of $p$)
\begin{equation}\label{reference density}
0<\tilde\rho<\gamma<3,
\end{equation}
and define
\begin{equation}\label{Phi}
\Phi(\rho)=\rho\int_{\tilde{\rho}}^{\rho}\frac{p(s)-p(\tilde{\rho})}{s^2}ds.
\end{equation}
Noting that
\begin{equation*}
  \Phi'(\rho)=\frac{\Phi(\rho)+p(\rho)-p(\tilde\rho)}{\rho},\qquad \mathrm{and}\qquad \Phi''(\rho)=\frac{p'(\rho)}{\rho},
\end{equation*}
then $\Phi(\tilde \rho)=\Phi'(\tilde\rho)=0$, and so that, there exist positive constants $c_1,c_2>0$ such that
\begin{equation}\label{positive definite}
 c_1(\rho-\tilde\rho)^2\leq\Phi(\rho)\leq c_2(\rho-\tilde\rho)^2.
\end{equation}
Moreover, combining with the mass conservation equation \eqref{NSAC}$_1$, one gets
\begin{equation}\label{renormalization mass conservation equation}
\Phi(\rho)_t+\big(\Phi(\rho)u\big)_x+\big(p(\rho)-p(\tilde{\rho})\big)u_x=0.
\end{equation}
Taking advantage of the local existence result Proposition \ref{local existence and uniqueness for approximate periodic solution}, we know that there exists a unique strong solution of the system  \eqref{NSAC}-\eqref{periodic boundary for Euler} for $T$ small enough.  By using the well-known alternative result, and the maximum principle for parabolic equation (see \cite{P2005}), it suffices to show the following  a priori estimate.
\begin{prop}[A priori estimate]\label{a priori estimate proposition for periodic boundary problem}
 Assume that $(v_0,u_0,\chi_0)$ satisfies  \eqref{initial data of v}-\eqref{Compatibility condition of chi}, let $(\rho,u,\chi)\in X_{\mathrm{per},m,M}([0,T])$ be a local solution for a given $T>0$, then  there exists a  positive constant $C$,  such that
 \begin{eqnarray}\label{a priori estimate}
\left.\begin{array}{llll}
 \displaystyle\sup_{t\in [0,T]}\big\{\|(\rho,u)(t)\|^2_1+\|\chi\|_2^2\big\}+\int_0^T\big(\|\rho\|_1^2+\|u\|_2^2+\|\chi\|_3^2\big)dt   \leq C.
\end{array}\right.
 \end{eqnarray}
\end{prop}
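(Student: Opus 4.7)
My plan is to proceed in four steps: (i) a combined fluid/Allen-Cahn energy identity, (ii) a maximum principle keeping $\chi\in[-1,1]$, (iii) pointwise two-sided bounds on $\rho$, and (iv) higher-order closure. For (i), I would test the momentum equation against $u$ (using \eqref{renormalization mass conservation equation} to absorb $\int u\,p_x\,dx$) and simultaneously test the Allen-Cahn equation against the chemical potential $\mu$ of \eqref{mu}. A direct computation, in which the cross-term $\frac{\epsilon}{2}\int u_x\chi_x^2\,dx$ from the capillary stress cancels against the corresponding contribution from the $\frac{\epsilon}{\rho}\chi_{xx}$ flux, yields
\begin{equation*}
\frac{d}{dt}\!\int_0^L\!\!\Big(\Phi(\rho)+\tfrac12\rho u^2+\tfrac{\epsilon}{2}\chi_x^2+\tfrac{\rho}{4\epsilon}(\chi^2-1)^2\Big)dx+\nu\!\int_0^L\! u_x^2\,dx+\!\int_0^L\!\mu^2\,dx=0,
\end{equation*}
which, together with \eqref{positive definite}, gives $L^\infty_t L^2_x$ control of $\sqrt{\rho}\,u$, $\chi_x$ and $\rho-\tilde\rho$ and $L^2_t L^2_x$ dissipation of $u_x$ and $\mu$. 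For (ii), I would rewrite the Allen-Cahn equation as the scalar parabolic equation $\chi_t+u\chi_x-\frac{\epsilon}{\rho^2}\chi_{xx}=-\frac{1}{\epsilon\rho}\chi(\chi^2-1)$; since the reaction term has the correct sign at $\chi=\pm1$, testing against $(\chi-1)_+$ and $(\chi+1)_-$ and invoking $-1\le\chi_0\le1$ yields $-1\le\chi\le1$ pointwise, which also bounds the Ginzburg-Landau contribution to the energy a priori.

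\textbf{Stage 3 (pointwise density bounds).} This is the crux. I would split $p(\rho)=P_1(\rho)-3\rho^2$ with $P_1(\rho)=\frac{8\Theta\rho}{3-\rho}$ increasing, convex, and singular at $\rho=3$, while the quadratic piece $3\rho^2$ is a harmless Lipschitz perturbation on any bounded range. Introducing the effective viscous flux
\begin{equation*}
F:=\nu u_x-\big(p(\rho)-p(\tilde\rho)\big)-\tfrac{\epsilon}{2}\chi_x^2,\qquad F_x=\rho(u_t+uu_x),
\end{equation*}
I would integrate $F_x$ on $[0,x]$ and exploit periodicity together with the Stage 1 controls on $\sqrt{\rho}\,u$ and $\chi_x$ to represent $F$ by already-bounded quantities. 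Combining $\nu u_x=F+(p(\rho)-p(\tilde\rho))+\frac{\epsilon}{2}\chi_x^2$ with the mass equation written as $\nu(\ln\rho)_t+\nu u(\ln\rho)_x=-\nu u_x$ and integrating along particle paths $\dot X(t)=u(X(t),t)$ produces the scalar ODE
\begin{equation*}
\frac{d}{dt}\ln\rho(X(t),t)+\tfrac{1}{\nu}P_1(\rho)=\tfrac{1}{\nu}\big(3\rho^2+p(\tilde\rho)-F-\tfrac{\epsilon}{2}\chi_x^2\big).
\end{equation*}
The singularity $P_1(\rho)\to+\infty$ as $\rho\to 3^-$ acts as a barrier: if $\rho$ approached $3$ along a trajectory the left-hand side would blow up while the right-hand side remains bounded, yielding a strict upper bound $\rho\le M_0<3$. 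The lower bound $\rho\ge m_0>0$ follows by rereading the same ODE with $P_1$ now bounded and combining with the $\|1/\rho\|_{L^\infty}$ estimate mentioned in Remark 1.3, itself obtained from integrability of $u_x$ along trajectories once $F\in L^\infty_tL^2_x$ is known and Sobolev embedding is applied.

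\textbf{Stage 4 and main obstacle.} Once $0<m_0\le\rho\le M_0<3$ is secured, $p'(\rho)$ is bounded on the effective range (its change of sign is inconsequential for $L^2$-type estimates), and I would close \eqref{a priori estimate} by standard manipulations: differentiating the mass equation and testing against $\rho_x$ for $\|\rho\|_{H^1}$; testing the momentum equation against $u_t$ and then against $u_{xx}$ for $\|u\|_{H^1}\cap L^2_tH^2$; and differentiating the Allen-Cahn equation, testing against $\chi_{xx}$ and $\chi_{xxt}$ with the Stage 1 $L^2_tL^2_x$ control of $\mu$ acting as a source, for $\|\chi\|_{H^2}\cap L^2_tH^3$. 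All cross-terms are absorbed by Young's inequality and the argument is closed by Gronwall. The main obstacle is unquestionably Stage 3: the combination of a non-monotone pressure and the extra capillary term $\frac{\epsilon}{2}\chi_x^2$ in the effective viscous flux forces the classical Kazhikhov-Shelukhin-type representation to be supplemented by the convex/non-convex decomposition of $p$ and by the energy control of $\mu$ and $\chi_x$ from Stage 1; the singularity of $p$ at $\rho=3$ supplies the barrier that non-monotonicity alone could not.
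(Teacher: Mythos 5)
Your Stage 1 is exactly the paper's Lemma 2.1 (the cancellation of $\tfrac{\epsilon}{2}\int u_x\chi_x^2\,dx$ between the capillary stress and the $\tfrac{\epsilon}{\rho}\chi_{xx}$ flux is correct and is precisely how \eqref{basic energy equality} is obtained), Stage 2 is a harmless variant of what the paper does (it proves only $\|\chi\|_{L^\infty}\le C$ from the energy in Lemma 2.2 and reserves the maximum principle for the statement $-1\le\chi\le1$), and Stage 4 is in the spirit of Lemma 2.5. The genuine gap is in Stage 3, which you correctly identify as the crux. You assert that integrating $F_x=\rho(u_t+uu_x)$ and "exploiting periodicity" represents the effective viscous flux $F$ "by already-bounded quantities", and later that "$F\in L^\infty_tL^2_x$ is known". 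Neither is available at this point: $F_x$ contains $u_t$, which you only estimate in Stage 4, and the basic energy identity controls $u_x$ only in $L^2_tL^2_x$, so $F$ is not pointwise bounded and your barrier ODE has an uncontrolled right-hand side. The Kazhikhov--Hoff machinery actually controls $\int_0^tF\,ds$ after writing $\int_{x_0}^x\rho\dot u\,dy=\frac{d}{dt}\int_{x_0}^x\rho u\,dy+[\rho u^2]_{x_0}^x$ and integrating in time, and even then one must handle $\int_0^t\rho u^2(x,s)\,ds$ pointwise in $x$ (which needs $\|u\|^2_{L^\infty_x}\in L^1_t$, itself requiring a mean-value argument because $\int\rho u^2\,dx\le C$ does not bound $\|u\|_{L^2}$ before a lower bound on $\rho$ is known) and the base-point term $\int_0^tF(x_0,s)\,ds$; none of this is in your sketch.

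The lower bound is the more serious problem: you obtain it by invoking "the $\|1/\rho\|_{L^\infty}$ estimate mentioned in Remark 1.3", but that estimate is precisely the hardest part of the proposition and is what must be proved; and the mechanism you offer for it, "integrability of $u_x$ along trajectories", does not follow from $\int_0^T\int_0^Lu_x^2\,dx\,dt\le C$, which controls $u_x^2$ only in a density-weighted average over trajectories, not along a single particle path. The paper's route is different and self-contained: the upper bound $\rho<3$ falls out of the singularity of $\Phi(\rho)$ at $\rho=3$ in the energy inequality (Lemma 2.3), while the lower bound and $\sup_t\|\rho_x\|_{L^2}$ are obtained simultaneously from the entropy-type estimate produced by rewriting $u_{xx}$ via the mass equation and multiplying the momentum equation by $(1/\rho)_x$ (Lemma 2.4), with the non-monotone pressure handled by splitting $[0,L]$ into the sets $A_{\mathrm{increase}}$ and $A_{\mathrm{decrease}}$ where $p'(\rho)\ge0$ and $p'(\rho)<0$, the latter contribution being bounded and absorbed by Gronwall against $\int\rho|(1/\rho)_x|^2\,dx$ together with the Sobolev inequality \eqref{the sobolev inequality of 1/rho}. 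Your convex/singular decomposition of $p$ plays the role the paper assigns to the singularity of $\Phi$ and to the sign-splitting of $p'$, but the particle-path ODE you build on it is not closed; to repair Stage 3 you must either carry out the full time-integrated flux representation with the caveats above or adopt the paper's $(1/\rho)_x$ estimate.
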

Proposition 2.2 can be obtained by the following series of  lemmas.

\begin{lem}\label{lem of lower estimate}
Under the assumption of Proposition \ref{a priori estimate proposition for periodic boundary problem},  for $\forall T>0$,
  it holds that
\begin{eqnarray}\label{the first energy inequality}
&&\int_0^L\Big(\rho u^2+\Phi(\rho)+\chi_x^2+\rho(\chi^2-1)^2\Big)dx+\int_0^T\int_0^L\Big(\mu^2+u_x^2\Big)dxdt\leq C,
\end{eqnarray}
where $\mu$ is defined in \eqref{mu}.
\end{lem}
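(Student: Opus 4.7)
The plan is to derive an exact free-energy identity by testing each of the three equations in \eqref{NSAC} against a carefully chosen multiplier and then exploit a cancellation between the capillary stress in the momentum equation and the convective term in the Allen-Cahn equation. The whole lemma should then fall out of a single identity after integration in time.

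First, I would multiply the momentum equation \eqref{NSAC}$_2$ by $u$ and integrate over $[0,L]$. Using \eqref{NSAC}$_1$ and periodicity, the convective and time-derivative terms combine into $\tfrac{d}{dt}\int_0^L \tfrac12\rho u^2\,dx$. Integrating the pressure term by parts and using $\int_0^L u_x\,dx=0$ to introduce the reference density $\tilde\rho$, then applying the renormalized mass equation \eqref{renormalization mass conservation equation}, converts $\int_0^L p_x u\,dx$ into $\tfrac{d}{dt}\int_0^L \Phi(\rho)\,dx$. The viscous term contributes $-\nu\int_0^L u_x^2\,dx$, while the capillary stress, after one integration by parts, leaves the residual $\tfrac{\epsilon}{2}\int_0^L \chi_x^2 u_x\,dx$, which I would not try to bound directly.

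Next comes the crucial step: multiply \eqref{NSAC}$_3$ by the chemical potential $\mu$ defined in \eqref{mu}, so that the right-hand side becomes the clean dissipation $-\int_0^L \mu^2\,dx$. Writing $\rho\mu = \tfrac{\rho}{\epsilon}(\chi^3-\chi) - \epsilon\chi_{xx}$, the polynomial part of $\mu$ combines with $\rho(\chi_t + u\chi_x)$ via $\chi^3-\chi = \partial_\chi\bigl(\tfrac14(\chi^2-1)^2\bigr)$ and the mass equation \eqref{NSAC}$_1$ to reassemble as $\tfrac{1}{4\epsilon}\tfrac{d}{dt}\int_0^L \rho(\chi^2-1)^2\,dx$. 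The gradient part $-\epsilon\chi_{xx}(\chi_t+u\chi_x)$, after integrating by parts in $x$, yields $\tfrac{\epsilon}{2}\tfrac{d}{dt}\int_0^L \chi_x^2\,dx$ together with $\tfrac{\epsilon}{2}\int_0^L u_x \chi_x^2\,dx$; this last term is exactly the negative of the capillary residual from the momentum step.

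Adding the two identities, the cross-term $\tfrac{\epsilon}{2}\int_0^L u_x\chi_x^2\,dx$ cancels and yields the exact free-energy balance
\[
\frac{d}{dt}\!\int_0^L\!\Bigl[\tfrac12\rho u^2+\Phi(\rho)+\tfrac{\epsilon}{2}\chi_x^2+\tfrac{1}{4\epsilon}\rho(\chi^2-1)^2\Bigr]dx+\int_0^L\!\bigl(\nu u_x^2+\mu^2\bigr)dx=0.
\]
Integrating in $t\in[0,T]$ and using the positivity $\Phi(\rho)\geq c_1(\rho-\tilde\rho)^2$ from \eqref{positive definite}, together with the initial bounds $0<\rho_0<3$, $-1\le\chi_0\le1$, and $\|(\rho_0,u_0)\|_1^2+\|\chi_0\|_2^2\le M$, the right-hand side is controlled by a constant depending only on the initial data; absorbing the $\epsilon$-dependent constants into $C$ gives the asserted inequality. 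The main obstacle is conceptual rather than computational, namely identifying $\mu$ as the correct multiplier and recognizing that the capillary/transport cross-term cancels exactly; once that observation is in place the estimate follows from a conservation law with dissipation and no Gronwall argument is needed.
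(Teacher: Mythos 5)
Your proposal is correct and follows essentially the same route as the paper: testing \eqref{NSAC}$_2$ with $u$ and \eqref{NSAC}$_3$ with $\mu$, using the renormalized mass equation \eqref{renormalization mass conservation equation} to convert the pressure work into $\frac{d}{dt}\int_0^L\Phi(\rho)\,dx$, and summing to obtain the exact energy balance \eqref{the basic energy inequality}. The only difference is presentational --- you make explicit the cancellation of the cross-term $\frac{\epsilon}{2}\int_0^L u_x\chi_x^2\,dx$ between the capillary stress and the transport term, which the paper leaves implicit in its combined computation.
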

\begin{proof}
Multiplying Eq.\eqref{NSAC}$_2$ by $u$ and Eq.\eqref{NSAC}$_3$ by $\mu$, integrating the resultant equations over $[0,L]$ and adding them up, one has
\begin{equation}\label{basic energy equality}
\frac{d}{dt}\int_0^L\big(\frac{\rho u^2}{2}+\frac{\epsilon\chi_x^2}{2}+\frac{\rho(\chi^2-1)^2}{4\epsilon}\big)dx+\int_0^L\Big(\mu^2+\nu u_x^2+up_x(\rho)\Big)dxdt=0.
\end{equation}
Integrating \eqref{renormalization mass conservation equation} and adding the result to \eqref{basic energy equality}, one then gets
\begin{equation}\label{the basic energy inequality}
\frac{d}{dt}\int_0^L\Big(\frac{\rho u^2}{2}+\frac{\epsilon}{2}\chi_x^2+\Phi(\rho)+\frac{\rho(\chi^2-1)^2}{4\epsilon}\Big)dx+\int_0^L\Big(\mu^2+\nu u^2_x\Big)dxd\tau=0.
\end{equation}
Integrating \eqref{the basic energy inequality} over $[0,T] $, one has
\begin{equation}\label{the basic energy inequality for density velocity and concentration difference}
\sup_{t\in[0,T]}\int_0^L\Big(\frac{\rho u^2}{2}+\frac{\epsilon}{2}\chi_x^2+\Phi(\rho)+\frac{\rho(\chi^2-1)^2}{4\epsilon}\Big)dx+\int_0^T\int_0^L\Big(\mu^2+\nu u^2_x\Big)dxd\tau=E_0,
\end{equation}
where $E_0=\int_0^L\big(\frac{1}{2}\rho_0 u_0^2+\frac{\epsilon}{2}\chi_{0x}^2+\Phi(\rho_0)+\frac{\rho_0}{4\epsilon}(\chi_0^2-1)^2\big)dx$.
The proof  is obtained.
\end{proof}

\begin{lem}\label{lem of lower estimate for chi}
Under the assumption of Proposition \ref{a priori estimate proposition for periodic boundary problem},  for $\forall T>0$,
  it holds that
\begin{eqnarray}\label{sup of concentration}
\|\chi\|_{L_{\mathrm{per}}^{\infty}}\leq C.
\end{eqnarray}
\end{lem}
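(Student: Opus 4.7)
\medskip

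The plan is to leverage the bounds already contained in Lemma \ref{lem of lower estimate} together with conservation of mass, and convert them into a uniform $L^\infty$ bound on $\chi$ via a one-dimensional Sobolev-type argument. I will \emph{not} attempt a direct pointwise maximum-principle comparison here; the sharper bound $-1\le\chi\le 1$ promised by Theorem \ref{main thm-1} can be recovered later using the parabolic nature of $\eqref{NSAC}_3$ once stronger estimates on $\rho$ and $u$ are in hand.

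First I would read off from \eqref{the first energy inequality} the two pieces of information
$$\sup_{t\in[0,T]}\|\chi_x(\cdot,t)\|_{L^2(0,L)}\le C,\qquad \sup_{t\in[0,T]}\int_0^L\rho(\chi^2-1)^2\,dx\le C,$$
with $C$ depending only on the initial data. Next, integrating the continuity equation $\eqref{NSAC}_1$ over $[0,L]$ and using the $L$-periodicity of $\rho u$ gives the mass conservation identity $\int_0^L\rho(x,t)\,dx=\int_0^L\rho_0(x)\,dx=:M_0>0$, uniformly in $t\in[0,T]$. The 1D fundamental theorem of calculus then controls the spatial oscillation of $\chi$: for every $x,y\in[0,L]$,
$$|\chi(x,t)-\chi(y,t)|\le\int_0^L|\chi_x(z,t)|\,dz\le L^{1/2}\|\chi_x(\cdot,t)\|_{L^2}\le C_1,$$
with $C_1$ depending only on the initial data.

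The final step is to combine the oscillation bound with the weighted $L^2$ bound on $\chi^2-1$. Let $K(t):=\|\chi(\cdot,t)\|_{L^\infty}$ and, assuming WLOG that $K(t)$ is attained as a positive value at some $x_\ast$, deduce from the oscillation estimate that $\chi(x,t)\ge K(t)-C_1$ for every $x\in[0,L]$. If $K(t)>C_1+1$, then $(\chi^2-1)^2\ge((K(t)-C_1)^2-1)^2$ pointwise, so
$$C\ge\int_0^L\rho(\chi^2-1)^2\,dx\ge M_0\bigl((K(t)-C_1)^2-1\bigr)^2,$$
which forces $K(t)\le C_1+\sqrt{1+(C/M_0)^{1/2}}$. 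A symmetric argument handles the case where the $L^\infty$ norm is attained as a negative value. Either way $\|\chi(\cdot,t)\|_{L^\infty}$ is bounded by a constant depending only on the initial data and $L$, which is the claim.

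There is no real obstacle in this argument; the only thing requiring care is bookkeeping, so that the constant $C$ in \eqref{sup of concentration} depends only on the initial data (via $\|\chi_{0x}\|_{L^2}$, $\int\rho_0(\chi_0^2-1)^2\,dx$ and $M_0=\int\rho_0\,dx$) and on $L$, and in particular is independent of the local-existence parameters $m$ and $M$ of the space $X_{\mathrm{per},m,M}([0,T])$. This is what is needed in order for the a priori estimate of Proposition \ref{a priori estimate proposition for periodic boundary problem} to close in later lemmas.
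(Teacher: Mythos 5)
Your proof is correct and follows essentially the same route as the paper: both arguments combine mass conservation $\int_0^L\rho\,dx=\int_0^L\rho_0\,dx$, the oscillation control $|\chi(x,t)-\chi(y,t)|\le L^{1/2}\|\chi_x(\cdot,t)\|_{L^2}\le C$, and the energy bound on $\int_0^L\rho(\chi^2-1)^2\,dx$ from Lemma \ref{lem of lower estimate}. The only (cosmetic) difference is the final anchoring step: the paper first deduces $\int_0^L\rho\chi^4\,dx\le C$ and hence $\int_0^L\rho|\chi|\,dx\le C$, then represents $\chi(x,t)$ through its $\rho$-weighted mean, whereas you close by contradiction, observing that a large $\|\chi(\cdot,t)\|_{L^\infty}$ together with the oscillation bound would make $(\chi^2-1)^2$ uniformly large against the conserved total mass.
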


\begin{proof}
Integrating the mass equation \eqref{NSAC}$_1$ over $[0,L]\times[0,t]$, one has
\begin{equation}\label{mass conservation}
 \int_0^L\rho(x,t)dx=\int_0^L\rho_0(x)dx.
\end{equation}
By Lemma 2.1, we then have
\begin{equation}\label{inequality of  chi}
  \int_0^L\rho\chi^4dx\leq2\int_0^L\rho\chi^2dx-\int_0^L\rho dx+C_1\leq\frac{1}{2}\int_0^L\rho\chi^4dx+C.
\end{equation}
Therefore
\begin{equation}\label{L4 L1 of chi}
  \int_0^L\rho\chi^4dx\leq C,\ \ \ \int_0^L\rho\chi dx \leq \int_0^L\rho\chi^4dx+\int_0^L\rho dx\leq C.
\end{equation}
From \eqref{the first energy inequality}, one has
\begin{eqnarray}\label{sup of chi}
|\chi(x,t)|&=&\frac{1}{\int_0^L\rho_0dx}\Big|\chi(x,t)\int_0^L\rho(y,t)dy\Big|\notag\\
&\leq&\frac{1}{\int_0^L\rho_0dx}\Big(\big|\int_0^L\big(\chi(x,t)-\chi(y,t)\big)\rho(y,t)dy\big|+\big|\int_0^L\chi(y,t)\rho(y,t)dy\big|\Big)\notag\\
&\leq&\frac{1}{\int_0^L\rho_0dx}\Big(\big|\int_0^L\rho(y,t)\big(\int_y^x\chi_s(s,t)ds\big)dy\big|+\big|\int_0^L\chi(y,t)\rho(y,t)dy\big|\Big)\notag\\
&\leq&\frac{1}{\int_0^L\rho_0dx}\int_0^L|\chi_x|dx\int_0^L\rho(y,t)dy+C_1\leq
C.
\end{eqnarray}
The proof  is completed.
\end{proof}

\begin{lem}\label{lem of sup estimate for rho}
Under the assumption of Proposition \ref{a priori estimate proposition for periodic boundary problem},  for $\forall T>0$,
  it holds that
\begin{eqnarray}\label{sup of density}
\|\rho\|_{L_{\mathrm{per}}^{\infty}([0,L]\times[0,T])}<3,\ \ \ \int_0^T\int_0^L\chi_{xx}^2dx\leq C.
\end{eqnarray}
\end{lem}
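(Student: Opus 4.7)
\textbf{Reduction of the $\chi_{xx}$ bound.}
The second estimate is quickly extracted from the first. From the definition $\mu=\frac{1}{\epsilon}(\chi^3-\chi)-\frac{\epsilon}{\rho}\chi_{xx}$ one solves $\chi_{xx}=\frac{\rho}{\epsilon^2}(\chi^3-\chi)-\frac{\rho}{\epsilon}\mu$. Squaring and integrating over $[0,L]\times[0,T]$, and using $\|\chi\|_{L^\infty}\le C$ (Lemma~\ref{lem of lower estimate for chi}), the upper bound $\rho<3$, and $\int_0^T\!\int_0^L\mu^2\,dx\,dt\le C$ (Lemma~\ref{lem of lower estimate}), yields $\int_0^T\!\int_0^L\chi_{xx}^2\,dx\,dt\le C$. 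Hence the substantive task is the uniform estimate $\rho<3$.

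\textbf{Lagrangian representation.}
To prove $\rho<3$ I would pass to Lagrangian mass coordinates $y=\int_0^x\rho(z,t)\,dz$, $\tau=t$, and set $v=1/\rho$. The system becomes
\[
v_\tau=u_y,\qquad u_\tau=\tilde\sigma_y,\qquad \tilde\sigma:=\frac{\nu u_y}{v}-p(v)-\frac{\epsilon}{2}\frac{\chi_y^2}{v^2}.
\]
Since $\nu(\log v)_\tau=\nu u_y/v=\tilde\sigma+p(v)+\tfrac{\epsilon}{2}\chi_y^2/v^2$, integrating in $\tau$ along a fixed mass label produces the Kazhikhov-type identity
\[
\nu\log v(y,\tau)=\nu\log v_0(y)+\int_0^\tau\tilde\sigma(y,s)\,ds+\int_0^\tau\Bigl(p(v)+\tfrac{\epsilon}{2}\chi_y^2/v^2\Bigr)ds.
\]
Because the last integrand is nonnegative, this identity functions as a one-sided differential inequality in which the pressure term acts as a restoring force pushing $v$ upward whenever it tries to become small; this is the mechanism that will ultimately exclude $v=1/3$, i.e.\ $\rho=3$.

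\textbf{Controlling $\int_0^\tau\tilde\sigma\,ds$ and the singularity step.}
From $u_\tau=\tilde\sigma_y$ one gets $\tilde\sigma(y_2,\tau)-\tilde\sigma(y_1,\tau)=\int_{y_1}^{y_2}u_\tau\,dy$, so integrating also in $\tau$ telescopes the right-hand side to $\int_{y_1}^{y_2}(u(y,\tau)-u_0(y))\,dy$, which is uniformly bounded after transporting the basic kinetic-energy estimate $\int\rho u^2\,dx\le C$ to Lagrangian variables. This controls the spatial oscillation of $y\mapsto\int_0^\tau\tilde\sigma\,ds$. The spatial mean is fixed by the periodicity identity $\int_0^L\tilde\sigma\,dx=-\int_0^Lp(\rho)\,dx-\frac{\epsilon}{2}\int_0^L\chi_x^2\,dx$, whose two pieces are controlled by Lemma~\ref{lem of lower estimate} together with the bound on $\int\Phi(\rho)\,dx$. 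Hence $\int_0^\tau\tilde\sigma(y,s)\,ds$ is pointwise bounded in $y$ and $\tau$. Substituted in the Kazhikhov identity, this already gives a finite upper bound $\rho\le K<+\infty$. To upgrade to strict $\rho<3$, I would invoke the singularity of $p$ by an ODE-comparison along the mass label $y$: at a putative approach of $v$ to $1/3$ the contribution $\int_0^\tau p(v)\,ds$ would dominate and push $v$ back up, contradicting the oscillation bound; this yields $\rho\le 3-\delta$ with $\delta>0$ depending on the initial data and $T$.

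\textbf{Main obstacle.}
The principal difficulty is the Allen--Cahn capillary stress $\frac{\epsilon}{2}(\chi_x^2)_x$ in the momentum equation, absent from the classical Kazhikhov framework. It contributes the new term $-\frac{\epsilon}{2}\chi_y^2/v^2$ to $\tilde\sigma$, and must be absorbed using the $L^\infty$ bound on $\chi$ from Lemma~\ref{lem of lower estimate for chi} together with the energy bound $\int\chi_x^2\,dx\le C$ from Lemma~\ref{lem of lower estimate}. The non-convexity of $p$ on $(\alpha,\beta)$ does not interfere with this step, because the argument uses only the sign $p\ge 0$ and the singularity at $\rho=3$; it will however reappear later when one attempts to control $\rho_x$.
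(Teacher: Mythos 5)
Your reduction of the $\chi_{xx}$ bound is exactly the paper's: solve \eqref{mu} for $\chi_{xx}$, square, and use $\|\chi\|_{L^\infty}\leq C$, $\rho<3$ and $\iint\mu^2\leq C$. For the substantive claim $\rho<3$, however, you take a genuinely different route from the paper, and your route has two concrete gaps. The paper does not use a Kazhikhov-type representation at all: it simply observes that $\Phi(\rho)=\rho\int_{\tilde\rho}^{\rho}\frac{p(s)-p(\tilde\rho)}{s^2}ds\to+\infty$ as $\rho\to3^-$ (since $p(s)\sim\frac{24\Theta}{3-s}$ gives $\Phi(\rho)\sim 8\Theta\ln\frac{1}{3-\rho}$) and combines this with the uniform-in-time bound $\sup_t\int_0^L\Phi(\rho)\,dx\leq E_0$ from Lemma \ref{lem of lower estimate} to exclude $\rho$ reaching $3$. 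That is an $L^1_x$-in-space control of a quantity that blows up at $\rho=3$, and it is all the paper needs.

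The first gap in your argument is the claim that the spatial mean of $\int_0^\tau\tilde\sigma\,ds$ is controlled because $\int_0^L p(\rho)\,dx$ is ``controlled by the bound on $\int_0^L\Phi(\rho)\,dx$.'' It is not: near $\rho=3$ one has $\Phi(\rho)\sim 8\Theta\ln\frac{1}{3-\rho}$ while $p(\rho)\sim\frac{24\Theta}{3-\rho}$, i.e.\ $p\sim C e^{\Phi/(8\Theta)}$, so the energy bound on $\int\Phi$ gives no bound on $\int p$ (and the mass-coordinate average actually requires $\int p\rho\,dx$, which is worse). Thus the pointwise bound on $\int_0^\tau\tilde\sigma\,ds$ --- the engine of the whole Kazhikhov identity --- is circular: it presupposes that $\rho$ stays away from $3$, which is what you are trying to prove. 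The second gap is the ``upgrade'' from $\rho\leq K$ to $\rho<3$. The restoring mechanism you invoke acts only through the time integral $\int_0^\tau p(1/v(y,s))\,ds$, and this integral need not be large when $v(y,\tau)$ makes a brief excursion toward $1/3$; since $\tilde\sigma(y,\cdot)$ is controlled only after integration in time, no genuine pointwise-in-time ODE comparison is available, so the comparison argument as described does not close. Without that upgrade your bound is $v\geq (\inf v_0)e^{-C/\nu}$, i.e.\ $\rho\leq(\sup\rho_0)e^{C/\nu}$, which can exceed $3$, where $p=+\infty$ and the equations lose meaning. If you want to salvage the Lagrangian route you would need an independent a priori bound on $\iint p\,dy\,ds$ and a quantitative barrier argument exploiting the first-order pole of $p$ at $v=1/3$; the paper's $\Phi$-based argument bypasses both issues.
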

\begin{proof}

Observing   Lemma 2.1, one has
\begin{eqnarray}\label{first energy inequality}
\sup_{t\in[0,T]}\int_0^L\Phi(\rho)dx\leq E_0=\int_0^L\big(\frac{1}{2}\rho_0 u_0^2+\frac{\epsilon}{2}\chi_{0x}^2+\Phi(\rho_0)+\frac{\rho_0}{4\epsilon}(\chi_0^2-1)^2\big)dx.
\end{eqnarray}
From the definitions of \eqref{Phi} and  \eqref{the formula of pressure},  one gets
\begin{equation}\label{delta limit}
\lim_{\delta\rightarrow0}\mathrm{mes}\big\{(x,t)\in[0,L]\times[0,T]\big|\rho(x,t)\geq3-\delta\big\}=0,
\end{equation}
thus
\begin{equation}\label{upper bound of density}
 \|\rho(x,t)\|_{L^{\infty}([0,L]\times[0,T])}<3.
\end{equation}
Moreover, from the equation \eqref{mu} and the energy inequalities \eqref{L4 L1 of chi}, \eqref{sup of chi},  one obtains
$$\int_0^T\int_0^L\chi_{xx}^2dx=\int_0^T\int_0^L\Big(\rho(\chi^3-\chi)-\rho\mu\Big)^2dx\leq C.$$
The proof  is completed.
\end{proof}

\begin{lem}\label{lem of inf estimate for rho}
Under the assumption of Proposition \ref{a priori estimate proposition for periodic boundary problem},  for $\forall T>0$,
  it holds that
\begin{eqnarray}\label{inf of density}
\sup_{t\in[0,T]}\|\rho_x\|_{L^2_{\mathrm{per}}}\leq C, \ \ \ \|\frac{1}{\rho}\|_{L_{\mathrm{per}}^{\infty}([0,L]\times[0,T])}\leq C.
\end{eqnarray}
\end{lem}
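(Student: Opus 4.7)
The plan is to obtain both bounds in one shot by controlling $\omega:=(\log\rho)_x=\rho_x/\rho$ in $L^\infty_tL^2_x$, from which the lower bound on $\rho$ will follow via mass conservation and 1D Sobolev embedding. Working with $\omega$ rather than $\rho_x$ directly avoids having to impose any a priori lower bound on $\rho$.

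I would start by differentiating the continuity equation $\rho_t+(\rho u)_x=0$ in $x$, obtaining the clean evolution $\omega_t+(u\omega)_x=-u_{xx}$. Multiplying by $\omega$, integrating over $[0,L]$, using periodicity, and substituting $u_{xx}$ from the momentum equation produces an energy identity containing the delicate term $\frac{1}{\nu}\int p'(\rho)\rho\,\omega^2\,dx$. The non-convexity of the pressure is handled by splitting $p'(\rho)=p'_+(\rho)-p'_-(\rho)$: the positive part remains on the left as a good term, while on $\{p'<0\}$ the density satisfies $\rho\in(\alpha,\beta)$ so $|p'(\rho)|\rho$ is uniformly bounded, and the negative part contributes at most $C\|\omega\|^2$ which is absorbable by Gronwall. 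The $u_t$-term is eliminated via the time-derivative trick $\int\rho_xu_t\,dx=-\frac{d}{dt}\int\rho u_x\,dx-\int\rho u_x^2\,dx-\int\rho_x u u_x\,dx$, whereby the convective piece cancels against the cross-term arising from $\rho uu_x$ in the momentum equation.

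The remaining right-hand side terms are controlled using Lemmas \ref{lem of lower estimate}--\ref{lem of sup estimate for rho}: the Allen-Cahn coupling $\int\omega\chi_x\chi_{xx}\,dx$ is bounded via the 1D interpolation $\|\chi_x\|_\infty\leq C\|\chi_{xx}\|^{1/2}$ (from $\|\chi_x\|_2\leq C$) together with Young's inequality with a small parameter $\eta$, leveraging $\int_0^T\|\chi_{xx}\|^2\,dt\leq C$; the convective term $\int u_x\omega^2\,dx$ is treated by a parallel interpolation argument using the mean-zero property of $u_x$ on a periodic interval. After time integration and Gronwall, one arrives at $\sup_{t\in[0,T]}\|\omega(t)\|^2_{L^2}\leq C$. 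The lower bound on $\rho$ is then immediate: mass conservation $\int_0^L\rho\,dx=M_0$ gives, at each $t$, a point $x^*(t)$ with $\rho(x^*(t),t)=M_0/L$, so $\log\rho(x,t)=\log(M_0/L)+\int_{x^*(t)}^x\omega(y,t)\,dy$ and Cauchy-Schwarz yields $\|\log\rho(\cdot,t)\|_{L^\infty}\leq|\log(M_0/L)|+L^{1/2}\|\omega(t)\|_{L^2}\leq C$, whence $\|1/\rho\|_{L^\infty}\leq C$ and $\sup_t\|\rho_x\|_{L^2}=\sup_t\|\rho\omega\|_{L^2}\leq C$.

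The main obstacle is the simultaneous treatment of the sign-changing pressure term, the quadratic convective interaction $\int u_x\omega^2\,dx$, and the Allen-Cahn cross-term $\int\omega\chi_x\chi_{xx}\,dx$: none of these admits an elementary bound using only the basic energy estimate of Lemma \ref{lem of lower estimate}, and the closure relies on a judicious combination of the pressure decomposition (by sign of $p'$), Agmon-type interpolation in 1D, and Young's inequality with a sufficiently small parameter to absorb the dangerous terms into the left-hand side of the Gronwall loop.
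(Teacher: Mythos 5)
Your overall strategy is the same Kanel'-type argument the paper uses: propagate an $L^2$ bound on the logarithmic density gradient by substituting $u_{xx}$ from the momentum equation, split the pressure term according to the sign of $p'$ (with $\rho\in[\alpha,\beta]$ on the decreasing branch so that the bad part is Gronwall-absorbable), handle the $u_t$-term by the time-derivative trick (your cross term $\int_0^L\rho u_x\,dx=-\int_0^L u\rho_x\,dx$ is indeed controlled by $\|\sqrt{\rho}u\|\,\|\rho_x/\sqrt{\rho}\|\leq C\|\omega\|$, hence absorbable), bound the Allen--Cahn coupling with a coefficient $\|\chi_{xx}\|^{3/2}\in L^1_t$, and recover the lower bound of $\rho$ from mass conservation plus Cauchy--Schwarz on $\log\rho$. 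All of that matches the paper's proof in substance.

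There is, however, one genuine gap: your choice of the \emph{unweighted} quantity $\|\omega\|_{L^2}^2$ with $\omega=(\log\rho)_x$ leaves the convective remainder $\tfrac12\int_0^L u_x\omega^2\,dx$ (coming from $\int(u\omega)_x\omega\,dx$), and this term cannot be closed with the estimates available at this stage. The interpolation you invoke would require $\|u_x\|_{L^\infty_x}$, i.e. $\|u_{xx}\|_{L^2}$ via Agmon, but the second derivative of $u$ is only controlled in Lemma \ref{higher order derivatives for chi and u}, which itself relies on the present lemma; the mean-zero property of $u_x$ gives $\|u_x\|_{L^\infty}\leq\int|u_{xx}|$ and does not help, and bounding $\|\omega\|_{L^4}^2$ instead would introduce $\omega_x$, which is worse. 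The cure is exactly what the paper does in \eqref{The relation between density and velocity}--\eqref{the basic energy equality-2 for density}: work with the density-weighted quantity $\int_0^L\rho\,|(1/\rho)_x|^2\,dx=\int_0^L\rho_x^2/\rho^3\,dx$ (equivalently, run the estimate in Lagrangian mass coordinates). With that weight the transport structure of the continuity equation makes the convective contributions cancel identically --- in the paper's computation the terms $-\int\rho u(1/\rho)_{xt}\,dx+\int(\rho u^2)_x(1/\rho)_x\,dx$ collapse exactly to $\int u_x^2\,dx$, which lies in $L^1_t$ by Lemma \ref{lem of lower estimate} --- so no analogue of $\int u_x\omega^2\,dx$ ever appears. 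Since $\rho<3$, the weighted quantity dominates $\tfrac13\|\omega\|^2$, so every other step of your argument (including the derivation of $\|1/\rho\|_{L^\infty}\leq C$ and $\sup_t\|\rho_x\|_{L^2}\leq C$) goes through unchanged once you make this substitution.
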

\begin{proof}
From the mass conservation equation \eqref{NSAC}$_1$, one has
\begin{eqnarray}\label{The relation between density and velocity}
 u_{xx}&=&-\big[\frac{1}{\rho}\big(\rho_t+\rho_x u\big)\big]_x=
 \big[(-\ln\rho)_t+\rho u(\frac{1}{\rho})_x\big]_x=[-(\ln\rho)_x]_t+[\rho u(\frac{1}{\rho})_x\big]_x\notag\\
 &=&\big[\rho(\frac{1}{\rho})_x]_t+[\rho u(\frac{1}{\rho})_x\big]_x=
 \rho(\frac{1}{\rho})_{xt}+\rho u(\frac{1}{\rho})_{xx}+[\rho_x(\frac1\rho)_t+(\rho u)_x(\frac1\rho)_x]\notag\\
 &=&\rho(\frac{1}{\rho})_{xt}+\rho u(\frac{1}{\rho})_{xx}-\frac{\rho_x}{\rho^2}\big[\rho_t+(\rho u)_x\big]=\rho(\frac{1}{\rho})_{xt}+\rho u(\frac{1}{\rho})_{xx}.
\end{eqnarray}
 Substituting \eqref{The relation between density and velocity} into the momentum equation \eqref{NSAC}$_2$, one gets
\begin{equation}\label{the other form for NSAC-2}
  (\rho u)_t+(\rho u^2)_x+p'(\rho)\rho_x=\nu\big[\rho\frac{d}{dt}(\frac{1}{\rho})_x+\rho u(\frac{1}{\rho})_{xx}\big]-\frac{\epsilon}{2}\big(\chi_x^2\big)_x,
\end{equation}
Multiplying  \eqref{the other form for NSAC-2} by $(\frac{1}{\rho})_x$, and integrating over $[0,L]$, further
\begin{eqnarray}\label{the basic energy equality-2 for density}
&&\frac{d}{dt}\int_0^L\big(\frac\nu2\rho\big|\big(\frac{1}{\rho}\big)_x\big|^2-\rho u(\frac{1}{\rho})_x \big)dx+\int_0^L \frac{p'(\rho)}{\rho^2}\rho_x^2dx\notag\\
&&=-\int_0^L\rho u(\frac1\rho)_{xt}dx+\int_0^L(\rho u^2)_x(\frac1\rho)_xdx+\frac{\epsilon}{2}\int_0^L\big(\chi_x^2\big)_x(\frac{1}{\rho})_xdx\notag\\
&&=\int_0^L\Big((\rho u)_x(-\frac{\rho_t}{\rho^2})+(\rho u^2)_x(-\frac{\rho_x}{\rho^2})\Big)dx+\epsilon\int_0^L\chi_x\chi_{xx}(\frac1\rho)_xdx\\
&&=\int_0^L u_x^2dx+\epsilon\int_0^L\chi_x\chi_{xx}(\frac1\rho)_xdx\notag\\
&&\leq\int_0^L u_x^2dx+\epsilon\Big(\|\frac1\rho\|_{L_{\mathrm{per}}^{\infty}}+\int_0^L\rho\big|(\frac1\rho)_x\big|^2dx\Big)\|\chi_{xx}\|_{L_{\mathrm{per}}^2}^2.\notag
\end{eqnarray}
In view of the mean value theorem, there exists $a(t)\in [0,L]$ satisfying $\rho(a(t),t)=\frac{1}{L}\int_0^L\rho_0dx$, so that
\begin{eqnarray}
\frac{1}{\rho(x,t)}&=&\frac{1}{\rho(x,t)}-\frac{1}{\rho(a(t),t)}+\frac{1}{\rho(a(t),t)}\notag\\
&=&\int_{a(t)}^x\big(\frac{1}{\rho(y,t)}\big)_ydy+\frac{L}{\int_0^L\rho_0dx}\notag\\
&\leq&\int_0^L\big|\frac{\rho_x(x,t)}{\rho^2(x,t)}\big|dx+\frac{L}{\int_0^L\rho_0dx}\\
&\leq&\big(\int_0^L\frac1\rho dx\big)^{\frac12}\Big(\int_0^L\frac{\rho^2_x(x,t)}{\rho^3(x,t)}dx\Big)^{\frac12}+\frac{L}{\int_0^L\rho_0dx}\notag\\
&\leq&\frac{1}{2}\big\|\frac{1}{\rho}\big\|_{L_{\mathrm{per}}^{\infty}}+
\frac{L}{2}\int_{0}^L\rho\big|\big(\frac{1}{\rho}\big)_x\big|^2dx+\frac{L}{\int_0^L\rho_0dx},\notag
\end{eqnarray}
then one has the Sobolev inequality about $\frac{1}{\rho}$,
\begin{equation}\label{the sobolev inequality of 1/rho}
 \big\|\frac{1}{\rho}\big\|_{L_{\mathrm{per}}^{\infty}}\leq  L\int_{0}^L\rho\big|\big(\frac{1}{\rho}\big)_x\big|^2dx+\frac{2 L}{\int_0^L\rho_0dx}.
 \end{equation}
Substituting the above expression into the inequality \eqref{the basic energy equality-2 for density},  one gets
\begin{eqnarray}\label{the energy equality for derivative of density}
 &&\frac{d}{dt}\int_0^L\big(\frac\nu2\rho\big|\big(\frac{1}{\rho}\big)_x\big|^2-\rho u(\frac{1}{\rho})_x \big)dx+\int_0^L \frac{p'(\rho)}{\rho^2}\rho_x^2dx\notag\\
&&\leq\int_0^L u_x^2dx+\epsilon\Big((L+1)\int_{0}^L\rho\big|\big(\frac{1}{\rho}\big)_x\big|^2dx+\frac{2 L}{\int_0^L\rho_0dx}\Big)\|\chi_{xx}\|_{L_{\mathrm{per}}^2}^2.
\end{eqnarray}
Setting
\begin{eqnarray}\label{the set of derivative for p}
 &&A_{\mathrm{increase}}(t)=\big\{x\in[0,L]\big|0\leq\rho(x,t)<\alpha\big\}\cup\big\{x\in[0,L]\big|\beta<\rho\leq M\big\},\\
 &&A_{\mathrm{decrease}}(t)=\big\{x\in[0,L]\big|\alpha\leq\rho(x,t)\leq\beta\big\},
\end{eqnarray}
then multiplying \eqref{the energy equality for derivative of density} by $\frac{\nu}{2}$, and adding up  \eqref{the basic energy inequality},   one gets
\begin{eqnarray}\label{key inequality-2 for density}
 &&\frac{d}{dt}\int_0^L\big(\frac{\mu^2\rho}{4}\big|\big(\frac{1}{\rho}\big)_x\big|^2-\frac{\mu\rho u}{2}(\frac{1}{\rho})_x +\frac{\rho u^2}{2}+\Phi(\rho)+\frac{\rho(\chi^2-1)^2}{4\epsilon}+\frac{\epsilon\chi_x^2}{2}\big)dx\notag\notag\\
&& \ +\int_{A_{\mathrm{increase}}(t)}\rho p'(\rho)(\frac1\rho)_x^2dx+\int_0^L\big(\mu^2+\frac{\nu}{2}u_x^2\big)dx\\
&&\leq\frac{\epsilon\nu}{2}\Big((L+1)\int_{0}^L\rho\big|\big(\frac{1}{\rho}\big)_x\big|^2dx+\frac{2 L}{\int_0^L\rho_0dx}\Big)\|\chi_{xx}\|_{L_{\mathrm{per}}^2}^2-\int_{A_{\mathrm{decrease}}(t)}\rho p'(\rho)(\frac1\rho)_x^2dx\notag\\
&&\leq\frac{\epsilon\nu}{2}\Big((L+1)\int_{0}^L\rho\big|\big(\frac{1}{\rho}\big)_x\big|^2dx+\frac{2 L}{\int_0^L\rho_0dx}\Big)\|\chi_{xx}\|_{L_{\mathrm{per}}^2}^2+\frac{6(27-4\Theta)}{(3-\beta)^2}\int_0^L\rho\big|(\frac1\rho)_x\big|^2dx.\notag
\end{eqnarray}
Integrating the inequality \eqref{key inequality-2 for density}  over $[0,T]$, applying Lemma 2.2-2.3 and combining with Gronwall's inequality, one obtains
\begin{eqnarray}
 &&\int_0^L\big(\rho\big|\big(\frac{1}{\rho}\big)_x\big|^2+\rho u^2+(\rho-\tilde\rho)^2+\rho(\chi^2-1)^2+\chi_x^2\big)dx\notag+\int_0^T\int_0^L\big(\mu^2+u_x^2\big)dx\leq C.
\end{eqnarray}
In view of \eqref{the sobolev inequality of 1/rho}, combining with $\int_0^L\rho\big|\big(\frac{1}{\rho}\big)_x\big|^2dx\geq\frac{1}{\|\rho\|_{L_{\mathrm{per}}^\infty}^3}\int_0^L\rho_x^2dx$, the proof of Lemma 2.4 is completed.
\end{proof}

The estimate of the higher order derivatives for the phase parameter $\chi$ and the velocity $u$ can be obtained in a simpler way then in Lemma 2.1-Lemma 2.4.
\begin{lem}\label{higher order derivatives for chi and u}
Under the assumption of Proposition \ref{a priori estimate proposition for periodic boundary problem},  for $\forall T>0$,
  it holds that
\begin{equation}\label{twice order derivatives for chi}
\sup_{t\in[0,T]}\big(\|\chi_{t}\|^2_{L^2_{\mathrm{per}}}+\|\chi_{xx}\|^2_{L^2_{\mathrm{per}}}\big)+
\int_0^T\int_0^L\big(\chi_{xt}^2+\chi^2_{t}+\chi_{xxx}^2\big)dxdt\leq C,
\end{equation}
\begin{equation}\label{twice order derivatives for u}
\sup_{t\in[0,T]}\|u_{x}\|^2_{L^2_{\mathrm{per}}}+\int_0^T\int_0^L\big(u_t^2+u_{xx}^2\big)dxdt\leq C.
\end{equation}
\end{lem}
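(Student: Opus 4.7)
The strategy is standard higher-order energy estimation: take $\chi_t$ (resp.\ $u_t$) as a test function in $\eqref{NSAC}_3$ (resp.\ $\eqref{NSAC}_2$), and exploit the bounds $0<m\leq\rho<3$, $\|1/\rho\|_{L^\infty}\leq C$, $\sup_t\|\rho_x\|_{L^2}\leq C$, $\|\chi\|_{L^\infty}\leq C$, $\int_0^T(\|u_x\|^2+\|\chi_{xx}\|^2)dt\leq C$ and $\sup_t\|\chi_x\|_{L^2}\leq C$ already established in Lemmas \ref{lem of lower estimate}--\ref{lem of inf estimate for rho} to absorb all lower-order terms. In 1D I will freely use $\|f\|_{L^\infty}^2\lesssim \|f\|_{L^2}\|f_x\|_{L^2}+\|f\|_{L^2}^2$, which together with the above bounds makes every Sobolev cross-term harmless.

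For the $\chi$-estimates, I multiply $\eqref{NSAC}_3$ by $\chi_t$ and integrate. The double-well piece yields $\tfrac{d}{dt}\int\tfrac{(\chi^2-1)^2}{4\epsilon}dx$, and the convection $\int\rho u\chi_x\chi_t dx$ is absorbed by $\tfrac12\int\rho\chi_t^2 dx$ thanks to $\|u\|_{L^\infty}^2\lesssim 1+\|u_x\|_{L^2}^2$, which is integrable in $t$. The diffusion term, after integration by parts in $x$ using periodicity and the identity $\int\tfrac{\chi_x\chi_{xt}}{\rho}dx=\tfrac{d}{dt}\!\int\tfrac{\chi_x^2}{2\rho}dx+\tfrac12\!\int\tfrac{\chi_x^2\rho_t}{\rho^2}dx$, becomes
\begin{equation*}
\int_0^L\frac{\epsilon}{\rho}\chi_{xx}\chi_t\,dx = -\frac{d}{dt}\int_0^L\frac{\epsilon\chi_x^2}{2\rho}dx + R(t),
\end{equation*}
where $R(t)$ collects commutators in $\rho_t=-(\rho u)_x$ and $\rho_x\chi_x\chi_t/\rho^2$; these are controlled by Cauchy--Schwarz and the 1D Sobolev embedding $\|\chi_x\|_{L^\infty}\lesssim\|\chi_x\|_{L^2}^{1/2}\|\chi_{xx}\|_{L^2}^{1/2}$, together with the $L^2_t$ bound on $\|\chi_{xx}\|_{L^2}$ from Lemma \ref{lem of sup estimate for rho}. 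Gronwall then yields $\sup_t\|\chi_t\|_{L^2}^2+\int_0^T\|\chi_t\|_{L^2}^2 dt\leq C$. Solving $\eqref{NSAC}_3$ algebraically as $\chi_{xx}=\tfrac{\rho^2}{\epsilon}(\chi_t+u\chi_x)+\tfrac{\rho}{\epsilon^2}(\chi^3-\chi)$ upgrades this to $\sup_t\|\chi_{xx}\|_{L^2}^2\leq C$. Finally, differentiating $\eqref{NSAC}_3$ in $x$ and testing against $\chi_{xt}$ gives $\int_0^T\|\chi_{xt}\|_{L^2}^2 dt\leq C$, after which the differentiated equation provides $\int_0^T\|\chi_{xxx}\|_{L^2}^2 dt\leq C$.

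For the $u$-estimate, I multiply $\eqref{NSAC}_2$ by $u_t$ and integrate. Using $\nu\int u_{xx}u_t dx=-\tfrac{\nu}{2}\tfrac{d}{dt}\|u_x\|^2$ and $-\int p_x u_t dx=\tfrac{d}{dt}\!\int p(\rho)u_x dx-\int p'(\rho)\rho_t u_x dx$ with $\rho_t=-(\rho u)_x$, the convective term $\int\rho u u_x u_t dx$ and the capillary term $\epsilon\int\chi_x\chi_{xx}u_t dx$ are both majorized by $\tfrac12\int\rho u_t^2 dx$ plus quantities of the form $C\|\chi_x\|_{L^\infty}^2\|\chi_{xx}\|_{L^2}^2\leq C\|\chi_{xx}\|_{L^2}^3$, which are integrable in $t$ thanks to the previous step. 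Gronwall yields $\sup_t\|u_x\|_{L^2}^2+\int_0^T\|u_t\|_{L^2}^2 dt\leq C$; solving $\eqref{NSAC}_2$ for $u_{xx}$ then gives $\int_0^T\|u_{xx}\|_{L^2}^2 dt\leq C$.

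The main obstacle is the ordering of the two estimates and the closure of the commutators. The capillary coupling $-\tfrac{\epsilon}{2}(\chi_x^2)_x$ in momentum forces the $\chi$-estimate to precede the $u$-estimate (so that $\sup_t\|\chi_{xx}\|_{L^2}$ is already available), while the $1/\rho$ coefficient in the Allen--Cahn diffusion produces $\rho_x$- and $\rho_t$-commutators whose control relies essentially on the uniform positivity and smoothness of $\rho$ established in Lemma \ref{lem of inf estimate for rho}. The non-convexity of $p$ emphasized in Remark 1.2 is harmless here because $p\in C^\infty$ on the relevant interval $[m,3-\delta]$ and so $|p'|,|p''|\leq C$, making the $\int p(\rho)u_x dx$-type cross-terms tame.
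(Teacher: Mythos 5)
There is a genuine gap at the first step of your $\chi$-estimate. Testing the \emph{undifferentiated} equation \eqref{NSAC}$_3$ with $\chi_t$ produces the dissipation $\int_0^L\rho\chi_t^2\,dx$ together with the time derivatives of $\int_0^L\frac{(\chi^2-1)^2}{4\epsilon}dx$ and $\int_0^L\frac{\epsilon\chi_x^2}{2\rho}dx$; no term of the form $\frac{d}{dt}\|\chi_t\|^2$ ever appears in that computation, so Gronwall can only return $\int_0^T\|\chi_t\|_{L^2}^2dt\leq C$ and $\sup_t\|\chi_x\|_{L^2}^2\leq C$ (the latter already known from Lemma \ref{lem of lower estimate}), \emph{not} the claimed $\sup_t\|\chi_t\|_{L^2}^2\leq C$. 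Since your next step --- solving the equation algebraically for $\chi_{xx}$ to get $\sup_t\|\chi_{xx}\|_{L^2}^2$ --- takes $\sup_t\|\chi_t\|_{L^2}$ as input, the chain breaks exactly where the new information is supposed to enter. The standard repair, and the one the paper uses after passing to Lagrangian coordinates, is to differentiate the Allen--Cahn equation in $t$ and test with $\chi_t$, which produces $\frac12\frac{d}{dt}\|\chi_t\|^2+\epsilon\int\frac{\chi_{yt}^2}{v}\,dy$ on the left and commutators in $v_t=u_y$ on the right; these are precisely the terms $I_1,\dots,I_4$ handled in \eqref{bound of chi-t}. Alternatively, you could run your third step first (differentiate in $x$, test with $\chi_{xt}$) to obtain $\sup_t\|\chi_{xx}\|^2$ and $\int_0^T\|\chi_{xt}\|^2dt$, and then read $\sup_t\|\chi_t\|$ off the equation pointwise in $t$ using $\|u\chi_x\|\leq\|u\|\,\|\chi_x\|_{L^\infty}$; but then the order of your steps must be reversed and the first Gronwall claim corrected.

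Apart from this, your route is a legitimate Eulerian variant of the paper's argument: the paper works in the mass Lagrangian coordinate (which converts the $\rho_t$, $\rho_x$ commutators you must control into $v_t=u_y$ and $v_y$, both already bounded by Lemmas \ref{lem of lower estimate} and \ref{lem of inf estimate for rho}), and for the velocity it multiplies the momentum equation by $-u_{yy}$ rather than by $u_t$, recovering $\int_0^T\int u_t^2$ at the end directly from the equation. Your $u_t$-multiplier version is equally workable once $\sup_t\|\chi_{xx}\|_{L^2}$ is in hand, for the reasons you give; note only that the boundedness of $p'(\rho)$ you invoke requires the quantitative separation $\rho\leq 3-\delta$ on $[0,L]\times[0,T]$, which follows from the strict inequality in Lemma \ref{lem of sup estimate for rho} by continuity and compactness.
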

\begin{proof}
For the sake of convenience, we introduce the Lagrange coordinate system below:
 \begin{equation}\label{lagrange coordinate}
y=\int_0^x\rho(s,t) ds,\ \ t=t;\qquad v=\frac{1}{\rho}.
\end{equation}
Integrating \eqref{NSAC} over $[0,R]\times[0,t]$ and using the boundary condition   \eqref{periodic boundary for Euler}, we have
\begin{equation}\label{conservation}
   \frac{1}{L}\int_0^L\rho dx=\frac1L\int_0^L\rho_0dx:=\bar\rho. \end{equation}
Setting
\begin{equation}\label{length of period}
\tilde{L}:=\bar\rho L,
\end{equation}
then the system \eqref{NSAC} can be reduced into
 \begin{equation}\label{Lagrange-NSAC-modified}
\left\{\begin{array}{llll}
\displaystyle v_t-u_y=0,  \ \ & y\in\mathbb{R},t>0,\\
\displaystyle u_t+p_y=\nu \big(\frac{u_{y}}{v}\big)_y-\epsilon\big(\frac{\chi_y^2}{v^2}\big)_y,\ \ & y\in\mathbb{R},t>0,\\
\displaystyle \chi_t=-\frac{v}{\epsilon}(\chi^3-\chi)+\epsilon v\big(\frac{\chi_y}{v}\big)_y,\ \ & y\in\mathbb{R},t>0,\\
(v,u,\chi)(y,t)=(v,u,\chi)(y+\tilde{L},t),\ \ & y\in\mathbb{R},t>0,\\
(v,u,\chi)\big|_{t=0}=(v_0,u_0,\chi_0),\ \ & y\in\mathbb{R}.
\end{array}\right.
\end{equation}
From \eqref{Lagrange-NSAC-modified}$_3$, one has
\begin{equation}\label{the time dereviative of chi}
 \chi_t=-\frac{1}{\epsilon}v(\chi^3-\chi)+\frac{\chi_{yy}}{v}-\frac{2\chi_yv_y}{v^2},
\end{equation}
then \eqref{the time dereviative of chi} and Lemma 2.1-2.4 implies that
\begin{equation}\label{the first energy inequality in Lagrange coordinate}
\displaystyle\int_0^{\tilde{L}}\Big(u^2+v^2_y+v^2+\chi_y^2+(\chi^2-1)^2\Big)dy+\int_0^T\int_0^{\tilde{L}}\Big(\mu^2+u_y^2+\chi_t^2+\chi^2_{yy}\Big)dy\leq C,\end{equation}
\begin{equation}\label{upper and lower bound in Lagrange coordinate}
\displaystyle 0<c\leq v\leq C<+\infty,\ \ and \ \ 0\leq\chi\leq C,
\end{equation}
and
\begin{equation}\label{the relationship between chi-yy with chi-t}
  \int_0^{\tilde{L}}\chi_{yy}^2dy\leq C\Big(\int_0^{\tilde{L}}\chi_t^2dy+1\Big).\end{equation}
Differentiating \eqref{the time dereviative of chi} with respect to $t$, one gets
\begin{equation}\label{time derivative of equation 3}
  \chi_{tt}=-\frac{v_t}{\epsilon}(\chi^3-\chi)-\frac{v}{\epsilon}(3\chi^2-1)\chi_t+\epsilon v_t\Big(\frac{\chi_y}{v^2}\Big)_y+\epsilon v\Big(\frac{\chi_y}{v^2}\Big)_{yt}.
\end{equation}
Multiplying \eqref{time derivative of equation 3} by $\chi_t$, and integrating it over $[0,\tilde{L}]$ with respect of $y$, one obtains
\begin{eqnarray}\label{bound of chi-t}
&&\frac{1}{2}\frac{d}{dt}\int_0^{\tilde{L}}\chi_t^2dy+ \epsilon\int_0^{\tilde{L}}\frac{\chi_{yt}^2}{v}dy\notag\\
&&=-\frac{1}{\epsilon}\int_0^{\tilde{L}}\big((\chi^3-\chi)u_y\chi_t+v(3\chi^2-1)\chi_t^2\big)dy+\epsilon\int_0^{\tilde{L}}u_y\Big(\frac{\chi_y}{v^2}\Big)_y\chi_tdy\notag\\
&&\ \ \ \ \ -\epsilon\int_0^{\tilde{L}}v_y\Big(\frac{\chi_y}{v^2}\Big)_{t}\chi_tdy+\epsilon\int_0^{\tilde{L}}\frac{2}{v^2}\chi_yu_y\chi_{yt}dy\notag\\
&&=I_1+I_2+I_3+I_4.
\end{eqnarray}
Following from Sobolev inequality and Lemma 2.1-2.4 and \eqref{the first energy inequality in Lagrange coordinate}-\eqref{upper and lower bound in Lagrange coordinate}, one deduces
\begin{equation}\label{I1}
\big|I_1\big|\leq C_1\big(\|u_y\|^2+\|\chi_t\|^2\big),
\end{equation}
\begin{eqnarray}\label{I2}
\big|I_2\big|&\leq& C\Big(\int_0^{\tilde{L}}\big|u_y\chi_{yy}\chi_t\big|dy+\int_0^{\tilde{L}}\big|u_y\chi_yv_y\chi_t\big|dy\Big)\notag\\
&\leq& C\Big(\|\chi_t\|_{L^{\infty}}\|u_y\|\|\chi_{yy}\|+\|\chi_t\|_{L^{\infty}}\|\chi_y\|_{L^{\infty}}\|u_y\|\|v_y\|\Big)\notag\\
&\leq& C\Big(\|\chi_t\|^2\|u_y\|+\|\chi_t\|^2\|u_y\|^{\frac43}+\|\chi_t\|^{\frac32}\|u_y\|+\|\chi_t\|^{\frac43}\|u_y\|^{\frac43}+\|\chi_t\|\|u_y\|+\|\chi_t\|^{\frac23}\|u_y\|^{\frac43}\Big)\notag\\
& &+\frac{\epsilon}{4}\|\chi_{yt}\|^2,
\end{eqnarray}
and
\begin{eqnarray}\label{I3}
\big|I_3\big|+\big|I_4\big|&\leq& C\int_0^{\tilde{L}}\big(|v_y\chi_{yt}\chi_t|+|v_y\chi_yu_y\chi_t|+|\chi_yu_y\chi_{yt}|\Big)dy\notag\\
&\leq&C\Big(\|\chi_t\|\|u_y\|^2+\|\chi_t\|^2\Big)+\frac{\epsilon}{4}\|\chi_{yt}\|^2.
\end{eqnarray}
Substituting \eqref{I1}--\eqref{I3} into \eqref{bound of chi-t}, applying the Gronwall's inequality, one drives
\begin{equation}\label{bound of chi-t and chi-yt}
\int_0^{\tilde{L}}\chi_t^2dy+\int_0^T\int_0^{\tilde{L}}\chi_{yt}^2dy\leq C.
\end{equation}
Combining with \eqref{the relationship between chi-yy with chi-t}, one gets
\begin{equation}\label{the bound of the second derivative of concentration}
\int_0^{\tilde{L}}\chi_{yy}^2dy\leq C.
\end{equation}
It holds that
 \begin{equation}\label{twice order derivatives for chi in lagrange coordinates}
\int_0^{\tilde{L}}(\chi_t^2+\chi_{yy}^2)dy+\int_0^T\int_0^{\tilde{L}}\big(\chi_{yt}^2+\chi^2_{t}+\chi_{yy}^2\big)dy\leq C.
\end{equation}
Multiplying \eqref{Lagrange-NSAC-modified}$_2$  by $-u_{yy}$, integrating over $[0,{\tilde{L}}]$ by parts, by using Sobolev inequality, Lemma 2.1-2.4  and \eqref{twice order derivatives for chi}, one obtains
\begin{eqnarray}\label{the second derivative of velocity}
 &&\big(\frac{1}{2}\int_0^{\tilde{L}}u^2_ydy\big)_t+\nu\int_0^{\tilde{L}}\frac{u_{yy}^2}{v}dy\notag\\
 &&=\int_0^{\tilde{L}} u_{yy}(p_\delta)'_vv_ydy+\int_0^{\tilde{L}}\frac{ u_{yy}u_yv_y}{v^{2}}dy+\int_0^{\tilde{L}}\frac{2\epsilon\chi_y\chi_{yy}u_{yy}}{v^3}dy-\int_0^{\tilde{L}}\frac{3\epsilon\chi_y^2v_y u_{yy}}{v^4}dy\notag\\
 &&\leq C\big(\|u_y\|^2+1\big)+\frac{\nu}{2}\int_0^{\tilde{L}}\frac{u_{yy}^2}{v}dy.
\end{eqnarray}
Thus  it holds that
 \begin{equation}\label{twice order derivatives for u in lagrange coordinates}
 \int_0^{\tilde{L}}u^2_ydy+\int_0^T\int_0^{\tilde{L}}u_{yy}^2dy\leq C.
\end{equation}
Let's go back to the Euler coordinates, by using \eqref{twice order derivatives for chi in lagrange coordinates}, \eqref{twice order derivatives for u in lagrange coordinates}, combining with $\chi_{xxx}=2\rho\rho_x\chi_t+\rho^2\chi_{xt}+2\rho\rho_xu\chi_x+\rho^2u_x\chi_x+\rho^2u\chi_{xx}+\rho_x(\chi^3-\chi)+\rho(3\chi^2-1)\chi_x$, one has
\begin{equation}
\sup_{t\in[0,T]}\big(\|\chi_{t}\|^2_{L^2_{\mathrm{per}}}+\|\chi_{xx}\|^2_{L^2_{\mathrm{per}}}\big)+
\int_0^T\int_0^L\big(\chi_{xt}^2+\chi^2_{t}\big)dxdt\leq C,
\end{equation}
\begin{equation}
\sup_{t\in[0,T]}\|u_{x}\|^2_{L^2_{\mathrm{per}}}+\int_0^T\int_0^Lu_{xx}^2dxdt\leq C,
\end{equation}
and
\begin{equation}\label{third derivative for chi}
 \int_0^T\int_0^L\chi_{xxx}^2dxdt\leq C.
\end{equation}
Furthermore, by using $u_t=-(p_\delta)_y+\nu \big(\frac{u_{y}}{v}\big)_y-\epsilon\big(\frac{\chi_y^2}{v^2}\big)_y$, one obtains
\begin{equation}\label{t derivative for u}
 \int_0^T\int_0^Lu_t^2dxdt\leq C.
\end{equation}
Then proof of Lemma 3.5 is achieved.
\end{proof}

From Lemma 2.1-Lemma 2.5, Proposition 2.2 is obtained, and the proof of Theorem 1.1 is completed.

\bigskip

\noindent {\textbf{Acknowledgments:}   The research of M.Mei was supported in part by NSERC 354724-2016 and FRQNT grant 256440. The research of  X. Shi was  partially supported by National Natural Sciences Foundation of China No. 11671027 and 11471321. The  work of X.P. Wang  was supported in part by the Hong Kong Research Grants Council (GRF grants 16324416, 16303318 and NSFC-RGC joint research grant N-HKUST620/15).}

\end{document}